\newtheorem{theorem}{Theorem}[section]
\newtheorem{lemma}[theorem]{Lemma}
\newtheorem{proposition}[theorem]{Proposition}
\newcommand{\proof}{\noindent{\bf Proof.\ }}
\newcommand{\qed}{\hfill $\square$ \bigskip}
\newcommand{\diam}{{\rm diam}}
\newcommand{\pch}{\chi_{\rho}}
\newcommand{\pchu}{\widehat{\chi}_{\rho}}
\begin{document}

\title{On Sierpi\'{n}ski packing chromatic number and recognition of Sierpi\'{n}ski products}

\author{P\v{r}emysl Holub$^{a,}$\thanks{Email: \texttt{holubpre@kma.zcu.cz}} 
\and Sandi Klav\v zar $^{b,c,d,}$\thanks{Email: \texttt{sandi.klavzar@fmf.uni-lj.si}}
}
\maketitle

\begin{center}
$^a$ Faculty of Applied Sciences, University of West Bohemia, Pilsen, Czech Republic\\
\medskip

$^b$ Faculty of Mathematics and Physics, University of Ljubljana, Slovenia\\
\medskip

$^c$ Institute of Mathematics, Physics and Mechanics, Ljubljana, Slovenia\\
\medskip

$^d$ Faculty of Natural Sciences and Mathematics, University of Maribor, Slovenia
\end{center}

\begin{abstract}
The Sierpi\'{n}ski product $G \otimes _f H$ of graphs $G$ and $H$ with respect to a function $f \colon V(G)\rightarrow V(H)$ has the vertex set $V(G)\times V(H)$. For every $g\in V(G)$ it contains a disjoint copy $gH$ of $H$, and for every edge $gg'$ of $G$ there is the edge $(g,f(g'))(g',f(g))$ between $gH$ and $g'H$. In this paper, the  Sierpi\'{n}ski packing chromatic number is defined as the minimum of $\chi_{\rho}(G\otimes _f H)$ over all functions $f$, where $\chi_{\rho}(X)$ is the packing chromatic number of $X$. The upper Sierpi\'{n}ski packing chromatic number is analogously defined as the maximum corresponding value. The (upper) Sierpi\'{n}ski packing chromatic number is determined for all Sierpi\'{n}ski product graphs whose both factors are complete. Sierpi\'{n}ski product graphs whose factors are paths or stars are also studied. Their Sierpi\'{n}ski packing chromatic number is always $3$, while their upper Sierpi\'{n}ski packing chromatic number is bounded from below and above. It is also proved that for a given graph $G$, it can be checked in polynomial time whether $G$ has a representation as a Sierpi\'{n}ski product graphs both factors of which being trees.
\end{abstract}

\noindent
{\bf Keywords:} Sierpi\'{n}ski product; packing chromatic number; complete graph; tree; recognition algorithm

\medskip

\noindent
{\bf AMS Subj.\ Class.\ (2020)}: 05C76, 05C15, 05C12, 68Q25

\section{Introduction}
\label{sec:intro}

Sierpi\'{n}ski graphs represent a very interesting and widely studied family of graphs. They were defined in 1997 in the paper~\cite{klavzar-1997}, where the primary motivation for their introduction was the intrinsic link to the Tower of Hanoi problem. Intensive research of Sierpi\'{n}ski graphs led to a review article~\cite{hinz-2017} in which state of the art up to 2017 is summarized and unified approach to Sierpi\'{n}ski-type graph families is also proposed. Later research on Sierpi\'{n}ski graphs includes~\cite{balakrishnan-2022, chanda-2025, farrokhi-2021, liu-2021, menon-2023, varghese-2021, zhang-2024}.

An interesting generalization of Sierpi\'{n}ski graphs in a completely new direction has been proposed in 2023 by Kovi\v{c}, Pisanski, Zemlji\v{c}, and \v{Z}itnik in~\cite{kpzz-2023}. In the spirit of classical graph products,  where the vertex set of a product graph is the Cartesian product of the vertex sets of the factors~\cite{hik-2011}, they introduced the Sierpi\'{n}ski product of graphs. In this set-up,  the classical, second power Sierpi\'{n}ski graphs appear as a special case. Moreover, it is also shown how to extend the Sierpi\'{n}ski product to multiple factors. 

In the seminal paper~\cite{kpzz-2023}, several basic properties of Sierpi\'{n}ski products are established. For instance, conditions on the factor graphs that yield planar Sierpi\'{n}ski products are presented, and a lot of effort is put into understanding the symmetries of this graph operation. Very recently, in the paper~\cite{mafnucci-2025+}, Maffucci is undertaking a new, in-depth study of the structure of Sierpi\'{n}ski product graphs. It reports necessary and sufficient conditions for their higher connectivity and describes the complete classification of the regular polyhedral Sierpi\'ski products. 

Assume now that an arbitrary graph invariant is given. Since any two fixed graphs give rise to many non-isomorphic Sierpi\'{n}ski products, it makes sense to consider the best and the worst behavior of the invariant over all such product graphs. Following this idea, the (upper) Sierpi\'{n}ski domination number and the (upper) Sierpi\'{n}ski metric dimension were respectively investigated in~\cite{henning-2023a+, henning-2023b+}. In the latter paper some metric properties of Sierpi\'{n}ski products are also determined, in particular, it is proved that the layers with respect to the second factor in a Sierpi\'{n}ski product graph are convex. Sierpi\'{n}ski products of complete graphs were further explored in\cite{tian-2025} where the focus was on their Sierpi\'nski general position number and their lower Sierpi\'nski general position number. 

The packing chromatic number was introduced in~\cite{goddard-2008}, named with the present name in~\cite{bresar-2007}, and extensively studied afterwards. A summary of the situation up to 2020 is given in the survey paper~\cite{bresar-2020}, for some recent developments in the area we refer to~\cite{dliou-2025, ferme-2025, gozupek-2025, gregor-2024, grochowski-2025} and references therein. From our perspective, the most important thing is that the packing chromatic number of Sierpi\'{n}ski graphs was investigated in the series of papers~\cite{bresar-2018, bresar-2016, deng-2021, korze-2019}. 

In this paper we continue the investigation of the Sierpi\'{n}ski product graphs with the main focus on their packing chromatic number. In the next section we recall some definitions, formally introduce the Sierpi\'{n}ski product, and define the (upper) Sierpi\'{n}ski packing chromatic number. In Section~\ref{sec:Hamming}, we determine the Sierpi\'{n}ski packing chromatic number and the upper Sierpi\'{n}ski packing chromatic number of the most dense Sierpi\'{n}ski product graphs, that is, those whose both factors are complete. In  Section~\ref{sec:paths}, we consider Sierpi\'{n}ski product graphs whose factors are paths or stars. For all of these four variants it is shown that their Sierpi\'{n}ski packing chromatic number is always $3$, while their upper Sierpi\'{n}ski packing chromatic numbers are bounded from below and above. In Section~\ref{sec:recognition}, we prove that for a given graph $G$, it can be checked in polynomial time whether $G$ has a representation as a Sierpi\'{n}ski product graphs both factors of which being trees.

\section{Preliminaries}

Let $G = (V(G), E(G))$ be a connected graph. If $u, v\in V(G)$, then $d_G(u,v)$ stands for the shortest-path distance between $u$ and $v$ in $G$. By a ($u,v$)-path in $G$ we mean a path between $u$ and $v$ in $G$. The diameter of $G$ is the largest distance between two vertices of $G$ and is denoted by $\diam(G)$. Let $X\subseteq V(G)$. Then $X$ is {\em independent} if $d_G(u,v) \ge 2$ holds for each pair of different vertices $u$ and $v$ from $X$. The {\em independence number} $\alpha(G)$ of $G$ is the cardinality of a largest independent set. If $d_G(u,v) \ge 3$ holds for each pair of different vertices $u$ and $v$ from $X$, then  $X$ is a {\em $2$-packing} of $G$, and the cardinality of a largest $2$-packing of $G$ is denoted by $\alpha_2(G)$.

Let $G$ and $H$ be graphs and let $f \colon V(G)\rightarrow V(H)$ be an arbitrary function. We will briefly write $f\in H^G$ for $f \colon V(G)\rightarrow V(H)$. The \textit{Sierpi\'{n}ski product of graphs $G$ and $H$ with respect to $f$}, denoted by  $G \otimes _f H$, is defined as the graph on the vertex set $V(G)\times V(H)$ with edges of two types:
\begin{itemize}
    \item \emph{Type-$1$ edges}:
    $(g,h)(g,h')$ is an edge of  $G \otimes _f H$ for every vertex $g\in V(G)$ and every edge $hh' \in E(H)$,
    \item \emph{Type-$2$ edges}: $(g,f(g'))(g',f(g))$ is an edge of $G \otimes _f H$ for every edge $gg' \in E(G)$.
\end{itemize}
We say that the graph $G$ is the {\em base graph} of $G \otimes _f H$ and that $H$ is the {\em fiber graph} of $G \otimes _f H$.  

We observe that the edges of Type-$1$ induce $n(G)$ copies of the graph $H$ in the Sierpi\'{n}ski product $G \otimes_f H$. For each vertex $u \in V(G)$, we denote by $uH$ the copy of $H$ corresponding to the vertex $u$. A Type-$2$ edge joins vertices from different copies of $H$ in $G \otimes _f H$, and is called a \emph{connecting edge} of $G \otimes _f H$. A vertex incident with a connecting edge is called a \emph{connecting vertex}. Note that for $u\in V(G)$, the fiber $uH$ of $G \otimes_f H$ contains at most $\deg_G(u)$ connecting vertices. 

The {\em packing chromatic number} $\pch(G)$ of a graph $G$ is the smallest integer $k$ for which there exists a mapping $c:V(G)\rightarrow [k] = \{1,\ldots, k\}$, such that if $c(u) = c(v) = \ell$ for $u\ne v$, then $d_G(u,v) > \ell$. Such a mapping $c$ is called a {\em packing $k$-coloring}. A monochromatic class of vertices colored with color $i$ is called an {\em $i$-packing}; note that each pair of vertices of an $i$-packing is mutually at distance more than $i$. 

Let $G$ and $H$ be graphs and let 
$$H^G = \{f:\ f:V(G) \rightarrow V(H)\}\,.$$ 
The \textit{Sierpi\'{n}ski packing chromatic number} $\pch(G, H)$ of the pair $(G,H)$ is
\[
\pch(G, H) = \min_{f\in H^G}\{\pch(G\otimes _f H)\}\,,
\]
and the \textit{upper Sierpi\'{n}ski packing chromatic number} $\pchu(G, H)$ of the pair $(G,H)$ is
\[
\pchu(G, H) = \max_{f\in H^G}\{\pch(G\otimes _f H)\}\,.
\]

If $G$ is a graph and $k\ge 1$, then the $k$-corona $G \odot pK_1$ of $G$ is the graph obtained from $G$ by adding $k$ pendant vertices to every vertex of $G$. We will need the following results below due to La\"{\i}che, Bouchemakh, and Sopena. 

\begin{theorem} {\rm \cite[Theorem 8]{laiche-2017}}
\label{thm:laiche}
If $n\ge 1$, then 
\[
\pch(P_n\odot 2K_1) = \left.
  \begin{cases}
    2; & n = 1, \\
    3; & n = 2, \\
    4; & n\in \{3,4\}, \\
    5; & 5\le n\le 11, \\
    6; & n\ge 12.
  \end{cases}
  \right.
\]
\end{theorem}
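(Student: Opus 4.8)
The plan is to reduce the problem to finitely many ``breakpoint'' cases using monotonicity. If $H$ is a subgraph of $G$, then distances in $H$ are at least those in $G$, so a packing $k$-coloring of $G$ restricts to one of $H$; hence $\pch(H)\le\pch(G)$. Since deleting an end spine vertex of $P_{n+1}\odot 2K_1$ together with its two pendants yields $P_n\odot 2K_1$ as an (isometric) subgraph, the sequence $\pch(P_n\odot 2K_1)$ is nondecreasing in $n$. Therefore it suffices to establish: $\pch(P_1\odot 2K_1)=2$ (this graph is $P_3$); the upper bounds $\pch(P_2\odot 2K_1)\le3$, $\pch(P_4\odot 2K_1)\le4$, $\pch(P_{11}\odot 2K_1)\le5$, and $\pch(P_n\odot 2K_1)\le6$ for every $n$; and the lower bounds asserting that $P_2\odot 2K_1$ has no packing $2$-coloring, $P_3\odot 2K_1$ has no packing $3$-coloring, $P_5\odot 2K_1$ has no packing $4$-coloring, and $P_{12}\odot 2K_1$ has no packing $5$-coloring. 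All claimed values then follow by squeezing between consecutive breakpoints.

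For the upper bounds, write $v_1,\dots,v_n$ for the spine and $a_i,b_i$ for the pendants at $v_i$. The cases $n\le4$ are handled by explicit assignments (for $n=4$, say, colour the spine $2,3,2,4$ and all pendants $1$, then verify the few distance constraints). For $n=11$ I would colour all pendants $1$, which reduces the task to packing-colouring the spine path $P_{11}$ with colours $\{2,3,4,5\}$ (a class of colour $i$ must consist of vertices pairwise more than $i$ apart); such an assignment exists and is checked in finitely many steps. The $6$-colouring valid for all $n$ is the more delicate construction: if all pendants get colour $1$, the spine must be packing-coloured by $\{2,3,4,5,6\}$, and the per-class densities $\tfrac13+\tfrac14+\tfrac15+\tfrac16+\tfrac17$ exceed $1$ only slightly, so one needs a periodic pattern of moderate period (or one placing a few higher colours on pendants) to make the rounding work; I would exhibit one explicit periodic colouring and verify the constraints over two consecutive periods.

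For the lower bounds I would use a counting argument. In any packing colouring let $n_i$ denote the size of colour class $i$. Any $i-1$ consecutive ``position triples'' $\{v_j,a_j,b_j\}$ induce a copy of $P_{i-1}\odot 2K_1$, of diameter $i$, hence contain at most one vertex of colour $i$, so $n_i\le\lceil n/(i-1)\rceil$ for $i\ge2$, while $n_1\le\alpha(P_n\odot 2K_1)=2n$. These bounds alone are too weak, so the key extra observations are: colour $1$ can be ``large'' only on pendants (if both pendants at $v_i$ are coloured $1$ then $v_i$ is not, and if $v_i$ is coloured $1$ then neither pendant is), and two spine vertices of colour $i$ must be more than $i$ apart \emph{along the spine}, so at most $\lceil n/(i+1)\rceil$ spine vertices carry colour $i$. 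Combining these with a short case analysis on the distribution of colour $1$ (equivalently, on the number $s$ of spine vertices of colour $\ge2$) forces the remaining vertices to overflow the capacities of colours $2,\dots,k$; this disposes of $n\in\{2,3\}$ easily and $n=5$ with a little more effort.

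The main obstacle is the last lower bound --- that $P_{12}\odot 2K_1$ admits no packing $5$-colouring --- because here the inequalities above are essentially tight: the colours $\{2,3,4,5\}$ have exactly enough spine-capacity ($4+3+3+2=12$) to colour the entire spine, so pure counting cannot finish, and one must reason about the actual \emph{placement} of the colour classes. The argument I would aim for first shows that a colour class meeting its density bound is (nearly) equally spaced and therefore collides --- at distance $2$, between a spine vertex and a pendant of an adjacent position --- with another such class; splitting on whether any spine vertex receives colour $1$ should reduce everything to a finite, if intricate, case check. This is the step I would budget most of the work for, and, should a slick argument not materialize, the finitely many configurations can be verified by computer.
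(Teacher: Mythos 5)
First, note that the paper does not prove this statement at all: it is quoted verbatim from La\"{\i}che, Bouchemakh and Sopena \cite[Theorem~8]{laiche-2017}, so there is no in-paper proof to compare yours against. Your overall skeleton (monotonicity of $\pch$ under taking subgraphs, hence it suffices to settle the breakpoints $n=1,2,3,4,5,11,12$ and a uniform upper bound) is the right organization, and your counting bounds $n_i\le\lceil n/(i-1)\rceil$ and the spine-capacity bound $\lceil n/(i+1)\rceil$ are correct. But several of the concrete steps you rely on are wrong or missing, so as written this is not a proof.

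Concretely: (i) your $n=4$ colouring (spine $2,3,2,4$, pendants $1$) is invalid, since the two spine vertices coloured $2$ are at distance $2$, not more than $2$ (the fix $2,3,4,2$ works, but what you wrote does not). (ii) Your strategy for $n=11$ --- colour all pendants $1$ and packing-colour the spine $P_{11}$ with colours $\{2,3,4,5\}$ --- is impossible. The class sizes are bounded by $4,3,3,2$ respectively (sum $12$), and a short case check shows no partition of $\{1,\dots,11\}$ into such classes exists: if $|S_4|=3$ then $S_4=\{1,6,11\}$, which forces $|S_3|\le 2$ and $|S_2|\le 3$, giving at most $10$ vertices; if $|S_4|=2$ one needs $|S_2|=4$ and $|S_3|=3$ simultaneously, and each of the five admissible choices of $S_2$ leaves the last two positions at distance at most $4$, too close for colour $5$. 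So the actual $5$-colouring of $P_{11}\odot 2K_1$ must put colour $1$ on some spine vertices and higher colours on some pendants; you have not exhibited it. (iii) The same issue is fatal for your ``uniform'' $6$-colouring: by Sloper's computer analysis (quoted in this paper's source), no colour sequence of length $35$ or more uses only colours $2$ through $6$, so ``all pendants get colour $1$'' cannot yield a $6$-colouring for $n\ge 35$; the alternative you mention in passing (moving some high colours onto pendants) is exactly the construction that needs to be written down and verified. (iv) Finally, the lower bound $\pch(P_{12}\odot 2K_1)\ge 6$, which you correctly identify as the crux, is left as ``a finite, if intricate, case check''; since this is the only genuinely hard part of the theorem, deferring it means the theorem is not proved. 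In short, the reduction to breakpoints is fine, but every breakpoint construction and the key lower bound still need to be supplied correctly.
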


\begin{theorem} {\rm \cite[Theorem 9]{laiche-2017}}
\label{thm:laiche 3K_1}
If $n\ge 1$, then 
\[
\pch(P_n\odot 3K_1) = \left.
  \begin{cases}
    2; & n = 1, \\
    3; & n =2, \\
    4; & \in \{3,4\} , \\
    5; & 5\le n \le 8, \\
    6; & n \ge 9.
  \end{cases}
  \right.
\]
\end{theorem}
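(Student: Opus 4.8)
The plan is to reduce the statement to a short list of lower bounds and a short list of explicit colorings, and then to indicate how each is obtained.

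Set $G_n=P_n\odot 3K_1$, let $v_1,\dots,v_n$ be the vertices of the underlying $P_n$, let $u_i^1,u_i^2,u_i^3$ be the leaves attached to $v_i$, and put $B_i=\{v_i,u_i^1,u_i^2,u_i^3\}$; the distances needed are $d(v_i,v_j)=|i-j|$, $d(v_i,u_j^a)=|i-j|+1$, $d(u_i^a,u_j^b)=|i-j|+2$ for $i\ne j$, and $d(u_i^a,u_i^b)=2$. Since $G_n$ is an induced subgraph of $G_{n+1}$ and the restriction of a packing coloring to a subgraph is again a packing coloring, $\pch(G_n)\le\pch(G_{n+1})$, and the right-hand side of the claimed formula is nondecreasing in $n$. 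Hence it suffices to (i) note $\pch(G_1)=2$ (colour $v_1$ by $2$ and the leaves by $1$), (ii) prove $\pch(G_2)\ge 3$, $\pch(G_3)\ge 4$, $\pch(G_5)\ge 5$ and $\pch(G_9)\ge 6$, and (iii) exhibit packing colorings of $G_2$, $G_4$ and $G_8$ with $3$, $4$ and $5$ colors together with one packing $6$-coloring valid for all $n$; the eight stated values then follow by combining (i)--(iii) with subgraph monotonicity.

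For the lower bounds I would use the following block observation: because $\diam(B_i)=2$, each colour $c\ge 2$ is used at most once inside $B_i$, and because the colour-$1$ class is independent, the $1$-coloured vertices of $B_i$ either lie among the leaves or consist of $v_i$ alone. Thus under a packing $k$-coloring every $B_i$ contains at least $5-k$ vertices of colour $1$; if this number is $\ge 2$ then $v_i$ is not coloured $1$; and if $v_i$ \emph{is} coloured $1$ then $u_i^1,u_i^2,u_i^3$ carry three pairwise distinct colors from $\{2,\dots,k\}$. For $k=2$ this forces every $v_i$ to colour $2$, impossible along the edge $v_1v_2$, so $\pch(G_2)\ge 3$; for $k=3$ it forces every $v_i$ into $\{2,3\}$, impossible for the pairwise-close triple $v_1,v_2,v_3$, so $\pch(G_3)\ge 4$ (hence $\pch(G_n)\ge 4$ for all $n\ge 3$). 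For $\pch(G_5)\ge 5$ and $\pch(G_9)\ge 6$ I would combine the block observation with distance bookkeeping along the path — colour $4$ forces its carriers $\ge 5$ apart and colour $5$ forces them $\ge 6$ apart, so on the few blocks of $G_5$, resp. $G_9$, these colors are scarce — together with the auxiliary fact that $P_9$ admits no packing coloring using only $\{2,3,4,5\}$ (a short analysis of how colour $2$, of density at most $1/3$, can be placed); in a hypothetical packing $5$-coloring of $G_9$ this forces some $v_i$ to colour $1$ and hence three expensive colors onto the leaves of $B_i$, after which one eliminates the surviving configurations by a finite case analysis organized by the independent set $\{i:\ v_i\text{ has colour }1\}$ and by the positions of colors $4$ and $5$. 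For the upper bounds, the colorings of $G_2$ ($v_1\mapsto 2$, $v_2\mapsto 3$, all leaves $\mapsto 1$), of $G_4$ with $4$ colors and of $G_8$ with $5$ colors are small enough to write down and check directly; for a packing $6$-coloring valid for all $n$, the natural attempt is to colour every leaf with $1$ (legitimate, since the leaves are independent and two leaves of distinct blocks are already at distance $\ge 3$) and to colour the path $v_1v_2\cdots$ periodically with colors from $\{2,3,4,5,6\}$ so that each colour $c$ occurs only at path-distance $>c$; the density bound $\tfrac13+\tfrac14+\tfrac15+\tfrac16+\tfrac17>1$ makes this plausible, and one writes down a concrete period realizing it, the verification then being routine. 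Subgraph monotonicity upgrades this to $\pch(G_n)\le 6$ for every $n$, which with the lower bounds finishes the proof.

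The step I expect to be the real obstacle is $\pch(G_9)\ge 6$, i.e.\ showing that the $36$-vertex graph $G_9$ has no packing $5$-coloring: the block observation and the path-distance inequalities do not by themselves close the argument (a crude capacity count stays above $36$), so one has to organize a genuine case analysis — first fixing which path vertices are coloured $1$, then pinning down where colors $4$ and $5$ sit using their scarcity and the non-colorability of $P_9$ by $\{2,3,4,5\}$, and then ruling out every remaining case. A secondary nuisance is producing a short explicit period for the general $6$-coloring of the path; the cases $n\in\{2,3,4,5,8\}$ are comparatively routine.
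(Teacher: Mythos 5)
This statement is not proved in the paper at all: it is imported verbatim as Theorem~9 of La\"{\i}che, Bouchemakh and Sopena \cite{laiche-2017}, so there is no in-paper proof to compare your argument against. Judged on its own, your framework is reasonable — subgraph monotonicity of $\pch$ is correct here (restricting a packing coloring to a subgraph only increases distances), the block observation is sound, and the deductions $\pch(G_2)\ge 3$ and $\pch(G_3)\ge 4$ together with the small explicit colorings are fine. But two of the eight claims are not actually established, and one of them rests on a construction that provably cannot work.

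The fatal issue is the general upper bound $\pch(G_n)\le 6$. You propose to colour every leaf with $1$ and the spine periodically with colours from $\{2,3,4,5,6\}$, justified by the density count $\tfrac13+\tfrac14+\tfrac15+\tfrac16+\tfrac17>1$. That density condition is necessary but far from sufficient, and in this case the required object does not exist: there is no packing colour-sequence of length $\ge 35$ using only the colours $2$ through $6$ (this is exactly the computational fact of Sloper quoted in a suppressed passage of this paper, and it is also why the companion result for $p\ge 4$ coronae — Theorem~\ref{thm:laiche pK_1} here — jumps to the value $7$ precisely at $n=35$). So your uniform $6$-colouring fails for all $n\ge 35$. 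The reason the answer nevertheless stays at $6$ for $p=3$ is that a correct colouring must occasionally assign colour $1$ to a spine vertex $v_i$ and distribute three distinct colours over the three leaves of $B_i$ (which is affordable for three leaves but not for four or more); any proof of the upper bound has to exhibit such a mixed periodic pattern and verify it, which your proposal does not do. Separately, the lower bounds $\pch(G_5)\ge 5$ and $\pch(G_9)\ge 6$ are only sketched as ``a finite case analysis'' — you flag this yourself, but as written these two steps, and hence four of the five claimed values, remain unproved.
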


\begin{theorem} {\rm \cite[Theorem 7]{laiche-2017}}
\label{thm:laiche pK_1}
If $n\ge 1$ and $p\geq 4$, then 
\[
\pch(P_n\odot pK_1) = \left.
  \begin{cases}
    2; & n = 1, \\
    3; & n =2, \\
    4; & \in \{3,4\}, \\
    5; & 5\le n \le 8, \\
    6; & 9 \le n \le 34, \\
    7; & n \ge 35.
  \end{cases}
  \right.
\]
\end{theorem}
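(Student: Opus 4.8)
\noindent\emph{Proof sketch.} The claimed values form a step function of $n$, so the task splits into proving, for each of the ranges $n=1$, $n=2$, $n\in\{3,4\}$, $5\le n\le 8$, $9\le n\le 34$, and $n\ge 35$, a lower bound and a matching upper bound on $\pch(P_n\odot pK_1)$. The plan is to start from one structural reduction. Write $v_1,\dots,v_n$ for the spine of $P_n\odot pK_1$ and let $c$ be a packing coloring. The $p\ge 4$ leaves at $v_i$ are pairwise at distance $2$, so for each color $m\ge 2$ at most one of them is colored $m$, and, being adjacent to $v_i$, none of them is colored $c(v_i)$; in particular, if $c(v_i)=1$ then these $p$ leaves carry $p$ pairwise distinct colors, all $\ge 2$, so $c$ uses at least $p+1\ge5$ colors. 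Moreover, if $c(v_i)\ne1$, then any leaf at $v_i$ colored $\ge2$ may be recolored $1$: its only neighbour is $v_i$, which is not colored $1$, so the color-$1$ class remains a packing and no other class grows. After doing this everywhere, every leaf is colored $1$ except at the spine vertices colored $1$, whose $p$ leaves use $p$ distinct colors $\ge2$. Since distances between spine vertices coincide with those in $P_n$ and $A:=\{i:c(v_i)=1\}$ is independent in $P_n$, computing $\pch(P_n\odot pK_1)$ becomes a coloring problem on $P_n$ alone: pick an independent set $A\subseteq\{1,\dots,n\}$, give each $i\notin A$ a color $a_i\ge2$ and each $i\in A$ a $p$-subset $S_i$ of colors $\ge2$ (its leaf colors), so that for every $m\ge2$ the carriers of $m$ — the $v_i$ with $a_i=m$ together with the leaves at the $v_j$ with $m\in S_j$ — are pairwise at distance larger than $m$; the answer is the least number of colors over all admissible data. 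Since $A\ne\emptyset$ already forces $\ge p+1\ge5$ colors, whenever few colors are available one may take $A=\emptyset$, and the question further reduces to packing-coloring $P_n$ with the colors $\{2,\dots,k\}$ only.

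\noindent\emph{Upper bounds.} For each range I would exhibit an explicit admissible assignment attaining the stated value. For $n\le34$ these are finitely many concrete colorings; the one substantial ingredient is a periodic pattern along the spine, all leaves colored $1$, that packing-colors $P_n$ with the colors $\{2,\dots,6\}$ whenever $n\le34$ and with $\{2,\dots,7\}$ for every $n$: fix a period, place color $2$ on roughly every third spine vertex, distribute colors $3,\dots,7$ over the remaining vertices, verify that a single period obeys all distance constraints, then tile and treat the two ends separately. The full period settles $n\ge35$ with $7$ colors, and a truncated period gives the finite ranges.

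\noindent\emph{Lower bounds.} For each jump I assume a packing coloring with $k$ colors for $n$ in the range where the value should be $k+1$ and look for a contradiction in the reduced formulation (and, for the few small $p$ with $k\ge p+1$, I first exclude $A\ne\emptyset$ using that a vertex of $A$ crams $p$ distinct colors $\ge2$ into a short window around it). A counting bound — a color $m\ge2$ occupies at most $\lceil n/(m+1)\rceil$ spine vertices when $A=\emptyset$ — immediately handles $n=1,2,3$. For $n=5$, $n=9$, and $n=35$ the counting inequality is satisfied, so the contradiction must instead come from the \emph{arrangement} of the scarce colors: for $n=5$ and $n=9$ a short positional case analysis suffices; for $n=35$ one runs a more elaborate case analysis on the positions of the sparsest colors ($6$, then $5$, then $4$) and shows that the subpaths left over for colors $2$ and $3$ are, in total, too short to be packing-colored by just those two colors at the required gaps.

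\noindent\emph{Main obstacle.} The hard step is precisely this last lower bound, the transition from $6$ to $7$ at $n=35$. Density considerations ($\tfrac13+\tfrac14+\tfrac15+\tfrac16=\tfrac{19}{20}<1$) show that the colors $\{2,\dots,5\}$ cannot packing-color long paths, which is what forces a sixth color; but the density sum for $\{2,\dots,6\}$ already exceeds $1$, so the jump to a seventh color cannot be obtained by counting and must come from the rigidity of how the classes of $2,\dots,6$ actually fit into $35$ consecutive vertices — a delicate finite optimization in which rounding effects and the extra rigidity created by a vertex of $A$ together with its $p$ leaves have to be tracked precisely, and dually one must produce a valid $6$-coloring for every $n\le34$. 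Once the reduction above is in place, the remaining ranges are comparatively routine.
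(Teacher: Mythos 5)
This statement is not proved in the paper at all: it is Theorem~7 of La\"{\i}che, Bouchemakh and Sopena \cite{laiche-2017}, quoted verbatim and used as an imported tool, so there is no internal proof to compare your attempt against. Judged on its own, your reduction is sound and is essentially the standard normal form for coronae: leaves attached to a spine vertex not colored $1$ can be recolored $1$, a spine vertex colored $1$ forces $p$ distinct colors $\ge 2$ on its leaves and hence at least $p+1\ge 5$ colors overall, and the problem collapses to a constrained coloring of $P_n$. Your counting bound $\lceil n/(m+1)\rceil$ and the density observation $\tfrac13+\tfrac14+\tfrac15+\tfrac16<1$ correctly locate where the easy arguments stop working. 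This is indeed the skeleton of the argument in \cite{laiche-2017}.

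However, as a proof the proposal has a genuine gap: everything that actually distinguishes the values $6$ and $7$, i.e.\ the entire content of the last two lines of the statement, is deferred. You do not exhibit the $6$-coloring valid for all $n\le 34$ (only assert that a periodic pattern with a suitable truncation exists), and you do not carry out the impossibility argument at $n=35$; you describe it as ``a more elaborate case analysis'' and correctly observe that counting cannot yield it. Moreover, for $p=4$ and $k=6$ you have $k\ge p+2$, so the case $A\ne\emptyset$ (a spine vertex colored $1$ with four distinctly colored leaves) cannot be dismissed by the color-count argument alone; your plan to ``exclude'' it via a window argument is plausible but unverified, and it is not even clear a priori that it can be excluded rather than merely handled as an additional case. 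Since this finite but delicate analysis is precisely what the theorem asserts, the sketch does not constitute a proof. In the context of this paper the correct move is simply to cite \cite{laiche-2017}, which is what the authors do.
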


\section{Sierpi\'{n}ski products of complete graphs}
\label{sec:Hamming}

In this section we determine $\pch(K_m, K_n)$ and $\pchu(K_m, K_n)$ for all $m\ge 2$ and $n\ge 2$. Throughout this section we assume that $V(K_m)=\{u_1,\dots, u_m\}$ and that $V(K_n) = [n]$. For an example of the Sierpi\'{n}ski product of complete graphs together with the notation used in this section, see Fig.~\ref{fig:K_5-K_4-example}.

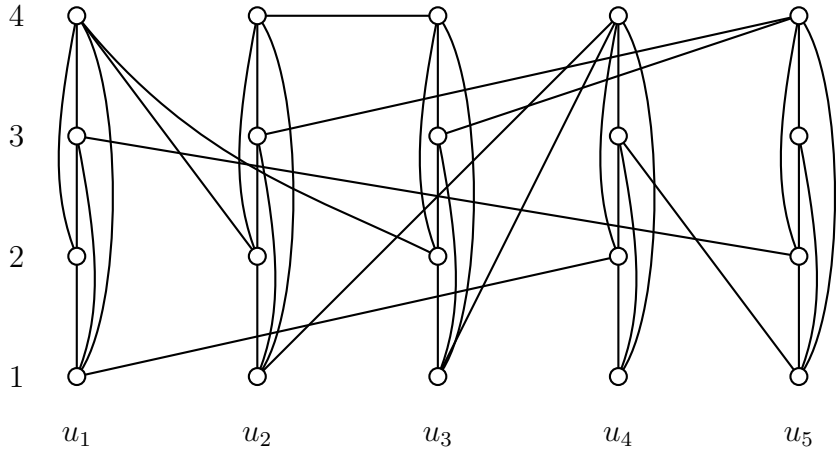
\begin{figure}[ht!]
\begin{center}
\begin{tikzpicture}[scale=0.8,style=thick,x=1cm,y=1cm]
\def\vr{4pt}

\begin{scope}[xshift=0cm, yshift=0cm] 
\coordinate(x1) at (0,0);
\coordinate(x2) at (0,2);
\coordinate(x3) at (0,4);
\coordinate(x4) at (0,6);
\draw (x1) -- (x2) -- (x3) -- (x4);
\draw (x1) .. controls (0.4,1) and (0.4,2) .. (x3);
\draw (x2) .. controls (-0.4,3) and (-0.4,4) .. (x4);
\draw (x1) .. controls (0.8,1) and (0.8,5) .. (x4);
\draw (x4) -- (3,2);
\draw (x4) .. controls (2,3.5) and (4,3) .. (6,2);
\draw (x1) -- (9,2);
\draw (x3) -- (12,2);
\draw (3,6) -- (6,6);
\draw (3,0) -- (9,6);
\draw (3,4) -- (12,6);
\draw (6,0) -- (9,6);
\draw (6,4) -- (12,6);
\draw (9,4) -- (12,0);
\draw(x1)[fill=white] circle(\vr);
\draw(x2)[fill=white] circle(\vr);
\draw(x3)[fill=white] circle(\vr);
\draw(x4)[fill=white] circle(\vr);
\node at (0,-1) {$u_1$};
\node at (3,-1) {$u_2$};
\node at (6,-1) {$u_3$};
\node at (9,-1) {$u_4$};
\node at (12,-1) {$u_5$};
\node at (-1,0) {$1$};
\node at (-1,2) {$2$};
\node at (-1,4) {$3$};
\node at (-1,6) {$4$};
\end{scope}
\begin{scope}[xshift=3cm, yshift=0cm] 
\coordinate(x1) at (0,0);
\coordinate(x2) at (0,2);
\coordinate(x3) at (0,4);
\coordinate(x4) at (0,6);
\draw (x1) -- (x2) -- (x3) -- (x4);
\draw (x1) .. controls (0.4,1) and (0.4,2) .. (x3);
\draw (x2) .. controls (-0.4,3) and (-0.4,4) .. (x4);
\draw (x1) .. controls (0.8,1) and (0.8,5) .. (x4);
\draw(x1)[fill=white] circle(\vr);
\draw(x2)[fill=white] circle(\vr);
\draw(x3)[fill=white] circle(\vr);
\draw(x4)[fill=white] circle(\vr);
\end{scope}
\begin{scope}[xshift=6cm, yshift=0cm] 
\coordinate(x1) at (0,0);
\coordinate(x2) at (0,2);
\coordinate(x3) at (0,4);
\coordinate(x4) at (0,6);
\draw (x1) -- (x2) -- (x3) -- (x4);
\draw (x1) .. controls (0.4,1) and (0.4,2) .. (x3);
\draw (x2) .. controls (-0.4,3) and (-0.4,4) .. (x4);
\draw (x1) .. controls (0.8,1) and (0.8,5) .. (x4);
\draw(x1)[fill=white] circle(\vr);
\draw(x2)[fill=white] circle(\vr);
\draw(x3)[fill=white] circle(\vr);
\draw(x4)[fill=white] circle(\vr);
\end{scope}
\begin{scope}[xshift=9cm, yshift=0cm] 
\coordinate(x1) at (0,0);
\coordinate(x2) at (0,2);
\coordinate(x3) at (0,4);
\coordinate(x4) at (0,6);
\draw (x1) -- (x2) -- (x3) -- (x4);
\draw (x1) .. controls (0.4,1) and (0.4,2) .. (x3);
\draw (x2) .. controls (-0.4,3) and (-0.4,4) .. (x4);
\draw (x1) .. controls (0.8,1) and (0.8,5) .. (x4);
\draw(x1)[fill=white] circle(\vr);
\draw(x2)[fill=white] circle(\vr);
\draw(x3)[fill=white] circle(\vr);
\draw(x4)[fill=white] circle(\vr);
\end{scope}
\begin{scope}[xshift=12cm, yshift=0cm] 
\coordinate(x1) at (0,0);
\coordinate(x2) at (0,2);
\coordinate(x3) at (0,4);
\coordinate(x4) at (0,6);
\draw (x1) -- (x2) -- (x3) -- (x4);
\draw (x1) .. controls (0.4,1) and (0.4,2) .. (x3);
\draw (x2) .. controls (-0.4,3) and (-0.4,4) .. (x4);
\draw (x1) .. controls (0.8,1) and (0.8,5) .. (x4);
\draw(x1)[fill=white] circle(\vr);
\draw(x2)[fill=white] circle(\vr);
\draw(x3)[fill=white] circle(\vr);
\draw(x4)[fill=white] circle(\vr);
\end{scope}

\end{tikzpicture}
\caption{The Sierpi\'{n}ski product $K_5 \otimes _f K_4$, where $f(u_1) = 2$, $f(u_2) = 4$, $f(u_3) = 4$, $f(u_4) = 1$, $f(u_5) = 3$.}
	\label{fig:K_5-K_4-example}
\end{center}
\end{figure}

We start with a couple of lemmas. 

\begin{lemma} \label{lem:diam}
If $m,n\geq 2$ and $f\in K_n^{K_m}$, then $\diam(K_m \otimes_f K_n)=3$.
\end{lemma}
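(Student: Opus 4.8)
The plan is to prove the two inequalities $\diam(K_m\otimes_f K_n)\le 3$ and $\diam(K_m\otimes_f K_n)\ge 3$ separately. For the upper bound I would first note that the graph is connected: each fiber $uK_n$ is a clique, and since $K_m$ is complete, for every pair of distinct $g,g'\in V(K_m)$ the connecting edge $(g,f(g'))(g',f(g))$ joins $gK_n$ and $g'K_n$. Now take arbitrary vertices $(g,h)$ and $(g',h')$. If $g=g'$ they lie in a common clique, so their distance is at most $1$. If $g\ne g'$, concatenate a path of length at most $1$ from $(g,h)$ to $(g,f(g'))$ inside $gK_n$, the connecting edge $(g,f(g'))(g',f(g))$, and a path of length at most $1$ from $(g',f(g))$ to $(g',h')$ inside $g'K_n$; this walk has length at most $3$, so $\diam(K_m\otimes_f K_n)\le 3$.

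For the lower bound it suffices to exhibit a single pair of vertices at distance exactly $3$, i.e.\ at distance at least $3$. The crucial step is to classify all walks of length at most $2$ between vertices $(g,h)$ and $(g',h')$ with $g\ne g'$. Since Type-$1$ edges preserve the first coordinate, any such walk uses at least one connecting edge, and a short inspection (distinguishing whether the middle vertex of a length-$2$ walk lies in $gK_n$, in $g'K_n$, or in a third fiber) shows that the distance between $(g,h)$ and $(g',h')$ is at most $2$ only if one of the following holds: $h=f(g')$; or $h'=f(g)$; or $h=h'$, $f(g)=f(g')$ and $h=f(g'')$ for some $g''\in V(K_m)\setminus\{g,g'\}$ (the last coming from the walk $(g,h)\,(g'',f(g))\,(g',h')$). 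Hence I only need to choose $g\ne g'$ and $h,h'$ avoiding all three possibilities.

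This I would do by a brief case analysis. If $n\ge 3$, take any $g\ne g'$ and pick $h\in V(K_n)\setminus\{f(g')\}$ and then $h'\in V(K_n)\setminus\{f(g),h\}$ (nonempty since $n\ge 3$); as $h\ne h'$, the third possibility is excluded. If $n=2$ and $f$ is not constant, choose $g,g'$ with $f(g)\ne f(g')$; the forced choices $h\ne f(g')$ and $h'\ne f(g)$ then satisfy $h\ne h'$, again excluding the third possibility. Finally, if $n=2$ and $f\equiv c$ is constant, pick any $g\ne g'$ and set $h=h'$ equal to the element of $V(K_n)\setminus\{c\}$; then $h\notin f(V(K_m)\setminus\{g,g'\})\subseteq\{c\}$, so the third possibility fails here too. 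In every case the distance between $(g,h)$ and $(g',h')$ is at least $3$, which together with the upper bound yields $\diam(K_m\otimes_f K_n)=3$.

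The connectedness and the upper bound are routine. The main obstacle is the lower bound: one has to rule out \emph{all} walks of length at most $2$, and — because a fiber can consist entirely of connecting vertices when $m-1\ge n$ — one cannot simply pick a ``non-connecting'' vertex and argue directly, which is exactly what forces the small extra case distinction when $n=2$ (constant versus non-constant $f$).
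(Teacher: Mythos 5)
Your proof is correct and follows essentially the same strategy as the paper: the upper bound via a connecting edge between the two fibers flanked by clique edges, and the lower bound by exhibiting a pair at distance $3$ after ruling out all walks of length at most $2$. Your explicit classification of the distance-$\le 2$ pairs is in fact more careful than the paper's argument, whose constant-$f$ case asks for a pair $(u_j,i'),(u_\ell,i'')$ with $i'\ne i''$ — a condition that cannot be met with both $i',i''$ different from the constant value when $n=2$ — so your separate treatment of $n=2$ quietly repairs that imprecision.
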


\begin{proof}
Let $G=K_m \otimes_f K_n$. By the definition of the Sierpi\' nski product, there is a connecting edge between any pair of fibers since $K_m$ is complete. In addition, since $K_n$ is complete, every vertex of any fiber is at distance at most  $1$ from any connecting edge. Therefore $\diam(G)\leq 3$. Assume first that $f$ is a constant function with value e.g.\ $i\in V(K_n)$). Then for every different fibers $u_jK_n$, $u_{\ell}K_n$, there is a pair of vertices $(u_j,i')\in u_jK_n$, $(u_{\ell},i'')\in u_{\ell}K_n$, $i'\not= i''$, which are at distance $3$. Assume second that $f$ is not a constant function and let $f(u_i)=j$ and $f(u_{i'}) = j'$, where $j\not= j'$. Then $(u_j,j')$ and $(u_i,j)$ are not incident to the connecting edge between $u_jK_n$ and $u_iK_n$. Suppose that $(u_j,j')$ and $(u_i,j)$ have a common neighbor. Then this common neighbor belongs to some fiber $u_kK_n$, $k\not=u_j$, $w\not=u_i$. Let $(u_k,\ell)$ be such a vertex. By the definition of the Sierpi\' nski product, $f(u_j)=f(u_i)=\ell$, a contradiction. Therefore $d_G((u_j,j'), (u_i,j))=3$. 
\qed
\end{proof}

\begin{lemma}
\label{lem:alpha2}
If $n\ge 3$, $n\ge m\ge 2$, and $f\in K_n^{K_m}$, then 
$\alpha_2(K_m \otimes_f K_n) = m$. 
\end{lemma}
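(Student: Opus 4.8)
The plan is to establish the two inequalities $\alpha_2(K_m \otimes_f K_n) \le m$ and $\alpha_2(K_m \otimes_f K_n) \ge m$ separately; the first is immediate, and the second rests on one short structural observation about connecting vertices. Write $G := K_m \otimes_f K_n$.

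For the upper bound, I would simply note that since $K_n$ is complete, each fiber $u_iK_n$ induces a clique in $G$, so any two vertices lying in a common fiber are adjacent, hence at distance $1$. Therefore a $2$-packing of $G$ contains at most one vertex from each of the $m$ fibers, and so $\alpha_2(G) \le m$.

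For the lower bound the key claim is: \emph{if $x$ and $y$ lie in different fibers and $d_G(x,y) \le 2$, then at least one of $x$, $y$ is a connecting vertex.} If $d_G(x,y) = 1$, the edge $xy$ joins two distinct fibers, hence is a connecting edge, so both its endpoints are connecting vertices. If $d_G(x,y) = 2$, pick a common neighbour $z$; say $x \in u_iK_n$, $y \in u_jK_n$ with $i \ne j$, and $z \in u_kK_n$. Since $i \ne j$, at least one of $k \ne i$, $k \ne j$ holds; if $k \ne i$ then the edge $zx$ joins the distinct fibers $u_kK_n$ and $u_iK_n$, so it is a connecting edge and $x$ is a connecting vertex, and symmetrically if $k \ne j$. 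This proves the claim.

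To finish, recall from the remark preceding the lemma that the fiber $u_iK_n$ contains at most $\deg_{K_m}(u_i) = m-1 \le n-1$ connecting vertices, hence it has at least one non-connecting vertex; fix such a vertex $x_i \in u_iK_n$ for each $i \in [m]$. For $i \ne j$ the vertices $x_i$ and $x_j$ lie in different fibers and neither is a connecting vertex, so the claim yields $d_G(x_i,x_j) \ge 3$. Thus $\{x_1,\dots,x_m\}$ is a $2$-packing of $G$, giving $\alpha_2(G) \ge m$, and combined with the upper bound we conclude $\alpha_2(G) = m$. I do not anticipate a real obstacle; the only point requiring care is the verification of the displayed claim (that a walk of length at most $2$ between distinct fibers must traverse a connecting edge incident to one of its ends), together with the elementary count $m-1 \le n-1$ — which is exactly where the hypothesis $m \le n$ is used to guarantee a non-connecting vertex in every fiber.
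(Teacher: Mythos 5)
Your proof is correct and follows essentially the same route as the paper's: the upper bound from each fiber being a clique, and the lower bound by selecting one non-connecting vertex per fiber, using $\deg_{K_m}(u_i)=m-1\le n-1$ to guarantee such a vertex exists. The only difference is that you explicitly verify (via your claim about walks of length at most $2$ crossing a connecting edge) that the chosen vertices are pairwise at distance at least $3$, a step the paper asserts without proof.
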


\begin{proof}
Since $n \ge m$, at most $n-1$ vertices of any fiber $u_iK_n$ are connecting vertices. Thus, in each fiber, at least one vertex is not connecting; let $M$ denote a set of non-connecting vertices such that $M$ contains exactly one vertex from each fiber $u_iK_n$, $i\in [m]$). Hence $\vert M\vert =m$. Since the vertices of $M$ are pairwise at distance at least $3$, we have $\alpha_2(K_m \otimes_f K_n)\geq m$. On the other hand, no $2$-packing can contain two vertices of the same fiber. Therefore $\alpha_2(K_m \otimes_f K_n)= m$. 
\qed
\end{proof}

\begin{lemma}
\label{lem:pch-of-hamming}
If $m\ge 3$, $n\ge 3$, and $f\in K_n^{K_m}$, then  
$$\pch(K_m \otimes_f K_n) \ge mn - 2m   + 2\,.$$ 
Moreover, if $n\ge m$, then the equality holds. 
\end{lemma}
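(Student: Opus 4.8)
The lower bound is a counting argument. Write $G = K_m\otimes_f K_n$. By Lemma~\ref{lem:diam} we have $\diam(G)=3$, so for every $\ell\ge 3$ any $\ell$-packing of $G$ consists of at most one vertex, since two vertices of an $\ell$-packing would be at distance $\ge \ell+1\ge 4>\diam(G)$. Also, as each fiber is a copy of $K_n$, every independent set and every $2$-packing of $G$ meets each of the $m$ fibers in at most one vertex, so $\alpha(G)\le m$ and $\alpha_2(G)\le m$. Hence if $c$ is a packing $k$-coloring of $G$ with $k=\pch(G)$, the color-$1$ and color-$2$ classes have sizes at most $\alpha(G)$ and $\alpha_2(G)$ while every higher color class is a singleton, whence $mn\le \alpha(G)+\alpha_2(G)+(k-2)\le 2m+k-2$; that is, $k\ge mn-2m+2$.

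For the equality when $n\ge m$ it remains to produce a packing $(mn-2m+2)$-coloring of $G$. I plan to do this by finding disjoint sets $A,B\subseteq V(G)$ with $A$ independent, $B$ a $2$-packing, and $|A|=|B|=m$: then coloring $A$ with $1$, $B$ with $2$, and the remaining $mn-2m$ vertices with pairwise distinct colors from $\{3,\dots,mn-2m+2\}$ gives a valid packing coloring, because the color-$1$ class is independent, the color-$2$ class has all pairwise distances $\ge 3$, and every higher class is a singleton.

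To produce $A$ and $B$, I would write $S_i=f(\{u_j:j\neq i\})$, so $|S_i|\le m-1\le n-1$ and the non-connecting vertices of $u_iK_n$ are exactly the $(u_i,a)$ with $a\in[n]\setminus S_i\neq\emptyset$; as in the proof of Lemma~\ref{lem:alpha2}, any transversal of non-connecting vertices (one per fiber) is automatically a $2$-packing. Call a fiber \emph{thin} if $|S_i|=n-1$; a thin fiber forces $n=m$. If every fiber is thin then $f$ is a bijection, and I would take $B=\{(u_i,f(u_i))\}$ (the unique maximum $2$-packing) and $A=\{(u_i,f(u_{\pi(i)}))\}$ for a cyclic permutation $\pi$ of the index set, which for $m\ge3$ is independent and disjoint from $B$. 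Otherwise some fiber is not thin; I would then take $B=\{(u_i,b_i)\}$ with $b_i\in[n]\setminus S_i$ arbitrary, choose $a_i\in[n]\setminus(S_i\cup\{b_i\})$ for each non-thin fiber (possible, as $|S_i\cup\{b_i\}|\le n-1$) and $a_i\in[n]\setminus(\{b_i\}\cup f(\{u_j:j\text{ thin},\,j\neq i\}))$ for each thin fiber (possible, as this forbidden set has at most $m-1<n$ elements); then $A=\{(u_i,a_i)\}$ is disjoint from $B$ and independent, since an adjacency $(u_i,a_i)\sim(u_j,a_j)$ would require $a_i=f(u_j)\in S_i$ and $a_j=f(u_i)\in S_j$, forcing both $i$ and $j$ thin, which the choices of $a_i,a_j$ preclude.

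The routine ingredients are the adjacency/distance computations in $K_m\otimes_f K_n$ that justify both ``a transversal of non-connecting vertices is a $2$-packing'' and the independence of the constructed $A$, together with the bookkeeping of colors. The genuinely delicate point is the upper bound in the borderline case $n=m$: there a maximum independent set and a maximum $2$-packing almost coincide, and when $f$ is a bijection they cannot be the ``obvious'' ones at all, so the bijective subcase must be isolated and handled by hand via the cyclic shift.
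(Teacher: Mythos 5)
Your proof is correct and follows essentially the same route as the paper: the identical counting argument ($\alpha(G)\le m$, $\alpha_2(G)\le m$, plus diameter $3$ forcing every color $\ge 3$ to be used once) for the lower bound, and for equality the exhibition of a disjoint independent set and $2$-packing of size $m$ each, with the bijective (equivalently, surjective) case handled by the same shifted-diagonal construction. Your greedy choice of the independent set in the non-bijective case is only a minor variation on the paper's device of passing to the induced copy of $K_m\otimes_f K_{n-1}$, and if anything it is more careful, since it avoids invoking the $\alpha_2$ lemma on that smaller product, whose hypotheses fail in the boundary case $n=m$.
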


\begin{proof}
Let $G = K_m \otimes_f K_n$. Since every fiber $u_iK_n$ is complete, we have $\alpha(G)\le m$ and $\alpha_2(G)\le m$. This means that in any packing coloring of $G$, at most $m$ vertices can receive color $1$ and at most $m$ vertices can receive color $2$. By Lemma~\ref{lem:diam}, every other vertex requires a new, private color. It follows that $\pch(G) \ge 2 + (mn - 2m) = mn - 2m + 2$.

Assume now that $n\ge m$ and distinguish the following two cases. 

\medskip\noindent
{\bf Case 1}: $f$ is not surjective. \\
We may  without loss of generality assume that the vertex $n$ is an image of no vertex from $K_m$. Then the set $M = \{(u_1,n), (u_2,n), \ldots, (u_m,n)\}$ is a $2$-packing of $G$. Consider the Sierpi\'nski product $G' = K_m \otimes_{f'} K_{n-1}$ which is the subgraph of $G$ induced by the vertices from $V(G)\setminus M$, and where $f' = f$. Then $G'$ contains a $2$-packing $N$ of size $m$ by the virtue of Lemma~\ref{lem:alpha2}. As $G'$ is induced in $G$, the set $N$ is also an independent set of $G$, hence we may color all vertices of $N$ with color $1$. Coloring all the remaining (uncolored) vertices of $G$ with mutually distinct colors, we have a packing coloring of $G$ with $mn - 2m   + 2$ colors, so that $\pch(G) \le mn - 2m + 2$. We can conclude that $\pch(G) = mn - 2m + 2$. 

\medskip\noindent
{\bf Case 2}: $f$ is surjective. \\
In this case $n=m$, and we may without loss of generality assume that $f(u_i) = i$, $i\in [m]$. Set $M = \{(u_1,1), (u_2,2), \ldots, (u_m,m)\}$ and observe that $M$ is  a 2-packing, hence coloring all vertices of $G$ with color $2$ is legal. Moreover, we infer that the set $N = \{(u_1,2), (u_2,3), \ldots, (u_m,1)\}$ is an independent set of $G$, thus we may color all vertices of $N$ with color $1$. Now just as in Case 1, by coloring all the remaining vertices of $G$ with mutually distinct colors, we find that $\pch(G) \le mn - 2m + 2$, and therefore $\pch(G) = mn - 2m + 2$. 
\qed
\end{proof}

\begin{theorem}
\label{thm:pch-hamming}
If $m\ge 3$ and $n\ge 3$, then  
$$\pch(K_m, K_n) = mn - 2m   + 2\,.$$ 
\end{theorem}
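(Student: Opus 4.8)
The plan is to sandwich $\pch(K_m,K_n)$ between the bound already supplied by Lemma~\ref{lem:pch-of-hamming} and the value achieved by one carefully chosen function $f$. The lower bound is free: Lemma~\ref{lem:pch-of-hamming} gives $\pch(K_m\otimes_f K_n)\ge mn-2m+2$ for \emph{every} $f\in K_n^{K_m}$, so minimising over $f$ yields $\pch(K_m,K_n)\ge mn-2m+2$. For the reverse inequality it suffices to exhibit a single $f$ with $\pch(K_m\otimes_f K_n)\le mn-2m+2$. When $n\ge m$ this is already the ``moreover'' part of Lemma~\ref{lem:pch-of-hamming} (every $f$ works), so the only new case is $m>n$; I would nonetheless handle all $m,n\ge 3$ at once by the same construction.

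Let $f$ be the constant function $f(u_i)=1$ for all $i\in[m]$ and put $G=K_m\otimes_f K_n$. The first step is the structural remark that the unique connecting vertex of the fiber $u_iK_n$ is $(u_i,1)$, since the connecting vertices of $u_iK_n$ are exactly $\{(u_i,f(u_j)):j\ne i\}=\{(u_i,1)\}$. Hence each vertex $(u_i,h)$ with $h\ne1$ has all its neighbours inside $u_iK_n$. From this I would deduce the only distance fact needed: for $i\ne j$ and $h,h'\in[n]\setminus\{1\}$, the vertices $(u_i,h)$ and $(u_j,h')$ are non-adjacent and have no common neighbour (their neighbourhoods lie in the disjoint fibers $u_iK_n$ and $u_jK_n$), while the walk $(u_i,h)-(u_i,1)-(u_j,1)-(u_j,h')$ witnesses $d_G\big((u_i,h),(u_j,h')\big)=3$.

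Now the colouring writes itself (here $n\ge3$ is used): give colour $1$ to $(u_i,2)$ for every $i$, colour $2$ to $(u_i,3)$ for every $i$, and pairwise distinct colours $3,4,\dots,mn-2m+2$ to the remaining $mn-2m$ vertices. The colour-$1$ class is independent, the colour-$2$ class is a $2$-packing because its vertices are pairwise at distance $3>2$ by the distance fact above, and every colour $\ge3$ is used once; so this is a packing colouring with $mn-2m+2$ colours. Therefore $\pch(G)\le mn-2m+2$, hence $\pch(K_m,K_n)\le mn-2m+2$, and combined with the lower bound the theorem follows.

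The one point needing care is the distance analysis behind the $2$-packing claim — especially excluding a common neighbour of $(u_i,3)$ and $(u_j,3)$ lying in a third fiber $u_kK_n$ — and this is exactly what the constant $f$ buys: it collapses all connecting vertices onto a single ``column'' $\{(u_i,1):i\in[m]\}$ and leaves every other vertex adjacent only within its own fiber, so no appeal to a Lemma~\ref{lem:alpha2}-type statement (which would require $n\ge m$) is needed. Checking that precisely $mn-2m$ vertices survive the removal of the two colour classes, namely $(u_i,1)$ together with the $(u_i,h)$ for $4\le h\le n$ in each fiber, is then routine bookkeeping.
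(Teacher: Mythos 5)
Your proof is correct and follows essentially the same route as the paper: the lower bound comes from Lemma~\ref{lem:pch-of-hamming}, and the matching upper bound is achieved by the constant function $f\equiv 1$ with $\{(u_i,2)\}$ as the colour-$1$ class and $\{(u_i,3)\}$ as the colour-$2$ class, exactly the sets $X$ and $X'$ in the paper's argument. The only differences are cosmetic: you use the constant-function construction uniformly for all $m,n\ge 3$ rather than invoking the ``moreover'' part of the lemma when $n\ge m$, and you spell out the distance-$3$ verification that the paper leaves implicit.
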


\begin{proof}
Lemma~\ref{lem:pch-of-hamming} implies that $\pch(K_m, K_n) \ge mn - 2m + 2$. We thus need to prove that for every $m\ge 3$ and every $n\ge 3$ there exists $f\in K_n^{K_m}$ such that $\pch(K_m \otimes_f K_n) = mn - 2m   + 2$. If $n\ge m$, then by Lemma~\ref{lem:pch-of-hamming}, any $f\in K_n^{K_m}$ does the job. 

Assume now that $m\ge n\ge 3$. Consider the constant function $f'\in K_n^{K_m}$ defined by $f'(u_i) = 1$ for each $i\in [m]$. Then $X= \{(u_i,2):\ i\in [m]\}$ is an independent set of $G$ and $X' = \{(u_i,3):\ i\in [m]\}$ is a $2$-packing of $G$. Since $|X| = |X'| = m$, we get  $\pch(G) \le mn - 2m + 2$, thus $\pch(G) = mn - 2m + 2$. 
\qed
\end{proof}

\begin{theorem}
\label{thm:pchu-hamming}
If $n\ge m\ge 3$, then 
$$\pchu(K_m, K_n) = mn - 2m   + 2\,,$$ 
and if $m\ge n\ge 3$, then
$$\pchu(K_m, K_n) = mn - m - n + 2\,.$$ 
\end{theorem}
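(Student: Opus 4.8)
The first equality needs no new work: when $n\ge m$, Lemma~\ref{lem:pch-of-hamming} gives $\pch(K_m\otimes_f K_n)=mn-2m+2$ for \emph{every} $f\in K_n^{K_m}$, so the maximum over $f$ equals $mn-2m+2$ as well. For the rest assume $m\ge n\ge3$. I would base the argument on one reduction: since $\diam(K_m\otimes_f K_n)=3$ by Lemma~\ref{lem:diam}, in any packing coloring every color class with index at least $3$ is a singleton, the class of color $1$ is an independent set, and the class of color $2$ is a $2$-packing. Hence an optimal packing coloring is obtained by choosing disjoint sets $A$ (independent) and $B$ (a $2$-packing) with $|A|+|B|$ maximal, coloring $A$ by $1$, $B$ by $2$, and assigning private colors to all other vertices; writing $\mu_f$ for this maximum, $\pch(K_m\otimes_f K_n)=mn-\mu_f+2$, so the claim reduces to $\min_f\mu_f=m+n$.

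To prove $\mu_f\ge m+n$ for every $f$ (equivalently $\pchu(K_m,K_n)\le mn-m-n+2$), I would first note that $\alpha(K_m\otimes_f K_n)=m$ always: at most one vertex per fiber can be used, and $\{(u_i,\,f(u_i)\oplus1):i\in[m]\}$, with $\oplus$ addition modulo $n$ on $[n]$, is independent because two of its vertices could only be joined by a connecting edge, which would force $f(u_i)\oplus1=f(u_j)$ and $f(u_j)\oplus1=f(u_i)$, i.e.\ $2\equiv0\pmod n$. Then split on surjectivity of $f$. If $w\notin f(V(K_m))$, then $B=\{(u_i,w):i\in[m]\}$ is a $2$-packing of size $m$, and a disjoint independent set of size at least $n$ lies inside the subgraph $K_m\otimes_f K_{n-1}$ induced by the vertices with second coordinate different from $w$ (which has an independent set of size $m$ when $n-1\ge3$, with a short direct argument for $n=3$). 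If $f$ is surjective, we may assume $f(u_i)=i$ for $i\in[n]$; then the cyclic transversal above together with $B=\{(u_i,i):i\in[n]\}$ are disjoint and $B$ is a $2$-packing, so $\mu_f\ge m+n$.

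For the reverse inequality I would take the ``almost constant but surjective'' function $f^{\ast}$ with $f^{\ast}(u_i)=1$ for $1\le i\le m-n+1$ and $f^{\ast}(u_{m-n+1+j})=1+j$ for $1\le j\le n-1$, and prove $\alpha_2(K_m\otimes_{f^{\ast}}K_n)=n$; combined with $\alpha(K_m\otimes_{f^{\ast}}K_n)=m$ this yields $\mu_{f^{\ast}}\le m+n$, while the previous paragraph gives $\mu_{f^{\ast}}\ge m+n$, so $\pch(K_m\otimes_{f^{\ast}}K_n)=mn-m-n+2$ and $\pchu(K_m,K_n)\ge mn-m-n+2$. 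The lower bound $\alpha_2\ge n$ is witnessed by $\{(u_i,f^{\ast}(u_i)):u_i\in T\}$ for a transversal $T$ picking one vertex of each $f^{\ast}$-value. For $\alpha_2\le n$ I would first establish the exact distance characterization: for distinct fibers, $d((u_i,a),(u_j,b))\ge3$ if and only if $a\ne f(u_j)$, $b\ne f(u_i)$, and it is \emph{not} the case that $a=b$, $f(u_i)=f(u_j)$, and $f(u_k)=a$ for some $u_k\notin\{u_i,u_j\}$ (the last clause recording a length-$2$ path through a third fiber). Suppose a $2$-packing uses $n+1$ fibers $I$. Since each value at least $2$ is taken only once by $f^{\ast}$, at most $n-1$ fibers of $I$ have value at least $2$, so $I$ contains $j\ge2$ fibers of value $1$. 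For such a fiber $u_i$ the first two clauses force its chosen second coordinate to avoid $\{f^{\ast}(u_k):u_k\in I\}$, a set of size $n+2-j$, leaving only $j-2$ admissible coordinates; and the third clause — noting that each admissible coordinate is at least $2$, lies outside $f^{\ast}(I)$, hence is realized by a single vertex necessarily outside $I$ — forbids two value-$1$ fibers from using the same coordinate. So $j$ distinct coordinates must fit a set of size $j-2$, a contradiction (already contradictory on the basic constraints when $j=2$).

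I expect the main obstacle to be the $\alpha_2\le n$ step: establishing the full distance characterization and recognizing that the length-$2$ paths through a third fiber are indispensable (a bound ignoring them is far too weak to reach $mn-m-n+2$). With that characterization in hand, the $\diam=3$ reduction, the identity $\alpha=m$, and the surjective/non-surjective constructions for the upper bound are all routine.
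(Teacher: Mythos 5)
Your proof is correct and follows essentially the same route as the paper: the same diameter-$3$ reduction (colors $\ge 3$ are singletons, so everything hinges on a maximum disjoint independent set plus $2$-packing), the same surjective/non-surjective case split for the bound $\pchu(K_m,K_n)\le mn-m-n+2$, and the same extremal function (value $1$ on all but one preimage of each of $2,\dots,n$) with a counting argument giving $\alpha_2=n$. Your pigeonhole argument for $\alpha_2\le n$ via the explicit distance characterization is a reorganized but equivalent version of the paper's Facts 1 and 2, and your explicit cyclic independent set $\{(u_i,f(u_i)\oplus 1)\}$ is a welcome touch that makes the ``independent set of size $m$'' step fully self-contained in the regime $m>n$ (where the paper's appeal to its Lemma on $\alpha_2$ of $K_m\otimes_{f}K_{n-1}$ is not directly applicable); the $n=3$ subcase you flag is indeed easily dispatched.
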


\begin{proof}
If $n\ge m\ge 3$, then the assertion follows by Lemma~\ref{lem:pch-of-hamming}. 

Assume in the rest of the proof that $m\ge n\ge 3$. Let $f\in K_n^{K_m}$ and set $G = K_m \otimes_f K_n$. We are going to show that $\pch(G) \le mn - m - n + 2$ and consider two cases. 

Assume first that $f$ is not surjective, where,  without loss of generality, $n$ is not in the range of $f$. Then we proceed as in Case 1 of the proof of Lemma~\ref{lem:pch-of-hamming} to infer that $G$ contains a disjoint $2$-packing and an independent set both of size $m$. Hence $\pch(G) \le mn - 2m + 2 \le mn - m - n + 2$ as required. 

Assume second that $f$ is surjective. Then we can follow the argument of Case 1 of the proof of Lemma~\ref{lem:pch-of-hamming} to see that $G$ contains a disjoint $2$-packing of size $n$ and an  independent set of size $m$, implying that $\pch(G) \le mn - m - n + 2$.

To complete the proof we need to show that there exists a function $f$ such that $\pch(K_m \otimes_f K_n) = mn - m - n + 2$. For this sake let $f':V(K_m)\rightarrow V(K_n)$ be defined by setting $f'(u_i)=i$ for each $i\in [n]$, and $f'(u_i)=1$ for each $i>n$. Let $G'=K_m \otimes_{f'} K_n$. Consider an arbitrary $2$-packing $M'$ of $G$. 

\medskip\noindent
{\bf Fact 1}: If $M' \cap \{(u_1,1), (u_{n+1},1), \ldots, (u_{m},1)\} \neq \emptyset$, then 
$$|M' \cap (V(u_1K_n) \cup V(u_{n+1}K_m) \cup \cdots \cup V(u_{m}K_m))| = 1\,.$$
Assume that $(v_i, 1)\in M'$, where $i\in \{1, n+1, \ldots, m\}$. Then every vertex from $X = V(u_1K_n) \cup V(u_{n+1}K_m) \cup \cdots \cup V(u_{m}K_m))$ is at distance at most $2$ from $(v_i, 1)$. Since $M'$ is a $2$-packing, $(v_i, 1)$ is the only vertex from $M'\cap X$ which proves Fact 1. 

\medskip\noindent
{\bf Fact 2}: If $j\in \{2,\ldots, n\}$, then 
$$|M' \cap \{(u_1,j), (u_{n+1},j), \ldots, (u_{m},j)\}| \le 1\,.$$
Assume that $(u_i, j)\in M'$, where $i\in \{1, n+1, \ldots, m\}$. If $(u_{i'}, j)$ is an arbitrary  vertex with $i'\in \{1, n+1, \ldots, m\}\setminus \{i\}$, then $(u_j,1)$ is a common neighbor of $(u_i, j)$ and $(u_{i'}, j)$. Hence $(u_{i'}, j)\notin M'$, and so $(u_i, j)$ is the only vertex from $M' \cap \{(u_1,j), (u_{n+1},j), \ldots, (u_{m},j)\}$. This proves Fact 2. 

\medskip
We are now ready to prove that $|M'|\le n$. We distinguish the following two cases.

\medskip\noindent
{\bf Case 1}: $M' \cap \{(u_1,1), (u_{n+1},1), \ldots, (u_{m},1)\} = \emptyset$.\\
In this case, the vertices of $M'$ lie in fibers $u_2K_n, \ldots, u_{n}K_n$. As each of these layers contains at most one vertex from $M'$ we have $|M'| \le n-1$. 

\medskip\noindent
{\bf Case 2}: $M' \cap \{(u_1,1), (u_{n+1},1), \ldots, (u_{m},1)\} \ne \emptyset$.\\
In this case, $|M' \cap (V(u_1K_n) \cup V(u_{n+1}K_m) \cup \cdots \cup V(u_{m}K_m))| = 1$ by Fact 1. Set $I = \{1,n+1,\ldots, m\}$, and let $(u_i,1)\in M'$, where $i\in I$. Set 
$$t = \Big|M' \cap \bigcup_{i'\in I\setminus \{i\}} V(u_{i'}K_n)\Big|\,.$$
In view of Fact 2, $t\le n-1$. Let now $(u_{i'},j)\in M'$, where $i'\ne i$. Then $j\in \{2,\ldots, n\}$ by Fact 1. Now, $(u_{i'},j)(u_{j},1)\in E(G)$, which implies that $M'\cap V(u_{j}K_n) = \emptyset$. From this and using Fact 2 again we get that among the fibers $u_2K_n, \ldots, u_{n}K_n$, at least $t$ of them have no vertex in $M'$. This in turn implies that $|M'| \le 1 + t + (n-1-t) = n$. We can conclude that $\alpha_2(G') \le n$ and hence $\alpha_2(G') = n$. 

Since $\alpha_2(G') \le n$ and $\alpha(G') \le m$, we have $\pch(G') \ge mn - m - n + 2$. As we already know that $\pch(G') \le mn - m - n + 2$ also holds, we are done. 
\qed
\end{proof}

Until now we have considered the Sierpi\'nski products $K_m \otimes_f K_n$, where $m\ge 3$ and $n\ge 3$. To cover all the cases, we now treat also the cases when $\min \{m,n\} = 2$.

The case when $m = 2$ is straightforward  since in this case, for any function $f$, the graph $K_m \otimes_f K_n$ is a graph obtained from the disjoint union of two copies of $K_n$ by adding an edge between a vertex of one copy and a vertex of the other. This graph $G$ is a diameter $2$ graph with $\alpha(G) = 2$, hence $\pch(G) = 2n-1$ and hence $$\pch(K_2, K_n) = \pchu(K_2, K_n) = 2n - 1\,.$$

Consider now the case $n = 2$ and let $f \in {K_2}^{K_m}$. Let $m_1 = |\{i\in [m]:\ f(u_i) = 1\}|$ and $m_2 = |\{i\in [m]:\ f(u_i) = 2\}|$. Then $m_1, m_2 \in \{0,1,\dots, m\}$, and $m_1 + m_2 = m$. We may without loss of generality assume that $m_1\ge m_2$. Then the graph $K_m \otimes_f K_2$ can be described as follows. It contains cliques $M_1$ and $M_2$ of respective orders $m_1$ and $m_2$, and independent sets $I_1$ and $I_2$ also of respective orders $m_1$ and $m_2$. There is a matching between $M_1$ and $I_1$ and a matching between $M_2$ and $I_2$. Finally, $I_1 \cup I_2$ induce a complete bipartite graph $K_{m_1,m_2}$. Let us denote the described graph by $G_{m_1,m_2}$. See Fig.~\ref{fig:m1=5,m2=3} where the graph $K_8 \otimes_f K_2$ with $m_1 = 5$ and $m_2 = 3$ is drawn, that is, the graph $G_{5,3}$.

\begin{figure}[ht!]
\begin{center}
\begin{tikzpicture}[scale=0.8,style=thick,x=1cm,y=1cm]
\def\vr{4pt}

\begin{scope}[xshift=0cm, yshift=0cm] 
\coordinate(x1) at (0,0);
\coordinate(x2) at (0,1);
\coordinate(x3) at (0,2);
\coordinate(x4) at (0,3);
\coordinate(x5) at (0,4);
\coordinate(y1) at (2,0);
\coordinate(y2) at (2,1);
\coordinate(y3) at (2,2);
\coordinate(y4) at (2,3);
\coordinate(y5) at (2,4);
\coordinate(z1) at (4,1);
\coordinate(z2) at (4,2);
\coordinate(z3) at (4,3);
\coordinate(w1) at (6,1);
\coordinate(w2) at (6,2);
\coordinate(w3) at (6,3);
\draw (x1) -- (x2) -- (x3) -- (x4) -- (x5);
\draw (x1) .. controls (-0.3,0.5) and (-0.3,1.5) .. (x3);
\draw (x2) .. controls (-0.3,1.5) and (-0.3,2.5) .. (x4);
\draw (x3) .. controls (-0.3,2.5) and (-0.3,3.5) .. (x5);
\draw (x1) .. controls (-0.6,1) and (-0.6,2) .. (x4);
\draw (x2) .. controls (-0.6,2) and (-0.6,3) .. (x5);
\draw (x1) .. controls (-0.9,0.5) and (-0.9,3.5) .. (x5);
\draw (w1) -- (w2) -- (w3);
\draw (w1) .. controls (6.3,1.5) and (6.3,2.5) .. (w3);
\foreach \i in {1,...,5}
{ 
\draw (x\i) -- (y\i);
\draw (y\i) -- (z1);
\draw (y\i) -- (z2);
\draw (y\i) -- (z3);
}
\foreach \i in {1,...,3}
{ 
\draw (z\i) -- (w\i);
}

\foreach \i in {1,...,5}
{ 
\draw(x\i)[fill=white] circle(\vr);
\draw(y\i)[fill=white] circle(\vr);
}
\foreach \i in {1,...,3}
{ 
\draw(z\i)[fill=white] circle(\vr);
\draw(w\i)[fill=white] circle(\vr);
}
\draw[rounded corners] (-1, -0.5) rectangle (0.5, 5);
\draw[rounded corners] (1.5, -0.5) rectangle (2.5, 5);
\draw[rounded corners] (3.5, 0.5) rectangle (4.5, 4);
\draw[rounded corners] (5.5, 0.5) rectangle (6.5, 4);
\node at (-0.5,4.5) {$M_1$};
\node at (2,4.5) {$I_1$};
\node at (4,3.5) {$I_2$};
\node at (6,3.5) {$M_2$};

\end{scope}
\end{tikzpicture}
\caption{The Sierpi\'{n}ski product $G_{m_1,m_2} =  K_8 \otimes _f K_2$, where $f(u_i) = 1$ for $i\in [5]$, and $f(u_i) = 2$ otherwise.}
\label{fig:m1=5,m2=3}
\end{center}
\end{figure}
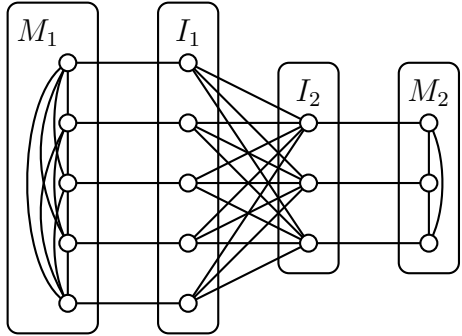

Consider now the representation $G_{m_1,m_2}$ of $K_m \otimes_f K_2$ with $m_1$ and $m_2$ as described above. We may without loss of generality assume that $m_1 \ge m_2$. We infer that $\diam(G_{m_1,m_2}) = 3$, and that $\alpha(G_{m_1,m_2}) = m_1 + 1$ and $\alpha_2(G_{m_1,m_2}) = 2$ as soon as $m_2\ge 1$. These facts imply that if $m_1 \ge m_2$, then  
\begin{equation}
\label{eq:small-cases}
\pch(G_{m_1,m_2}) = 
  \begin{cases}
    2m - m_1 - 1; & m_2\ge 2, \\
    m+1; & m_2 \in \{0,1\}\,.
  \end{cases}
\end{equation}
Determining the extreme cases in~\eqref{eq:small-cases} we arrive at the following result. 

\begin{proposition}
\label{prop:n=2}
If $m\ge 3$, then 
\begin{align*}
    \pch(K_m, K_2) & = m + 1, \\
    \pchu(K_m, K_2) & = 2m - \left\lceil \frac{m}{2} \right\rceil - 1\,. 
\end{align*}
\end{proposition}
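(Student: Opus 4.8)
The plan is to read both quantities off directly from the classification of the graphs $K_m\otimes_f K_2$ established above. Every $f\in K_2^{K_m}$ yields, up to isomorphism, the graph $G_{m_1,m_2}$ with $m_1=|\{i\in[m]:f(u_i)=1\}|$ and $m_2=|\{i\in[m]:f(u_i)=2\}|$, and conversely every pair of nonnegative integers with $m_1+m_2=m$ is realized by a suitable $f$. Hence, writing the admissible pairs as $m_1\ge m_2\ge 0$ with $m_1+m_2=m$,
\[
\pch(K_m,K_2)=\min_{m_1\ge m_2}\pch(G_{m_1,m_2}),\qquad \pchu(K_m,K_2)=\max_{m_1\ge m_2}\pch(G_{m_1,m_2}),
\]
and by~\eqref{eq:small-cases} the problem reduces to optimizing, over those pairs, the two-branch expression whose value is $2m-m_1-1$ when $m_2\ge 2$ and $m+1$ when $m_2\in\{0,1\}$.

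For the first identity I would argue as follows. The constant function gives the shape $(m_1,m_2)=(m,0)$, so $\pch(K_m,K_2)\le m+1$. Conversely, on the branch $m_2\in\{0,1\}$ the value is exactly $m+1$, while on the branch $m_2\ge 2$ we have $m_1=m-m_2\le m-2$, hence $2m-m_1-1\ge m+1$; thus every admissible value is at least $m+1$, and $\pch(K_m,K_2)=m+1$.

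For the second identity, note that the branch $m_2\in\{0,1\}$ contributes the constant $m+1$, while on the branch $m_2\ge 2$ the quantity $2m-m_1-1$ is decreasing in $m_1$ and is therefore largest when $m_1$ is as small as the constraints $m_1\ge m_2$, $m_1+m_2=m$ permit, namely $m_1=\lceil m/2\rceil$, $m_2=\lfloor m/2\rfloor$. For $m\ge 4$ this balanced split is admissible, since then $\lfloor m/2\rfloor\ge 2$; it yields the value $2m-\lceil m/2\rceil-1$, and an elementary check shows $2m-\lceil m/2\rceil-1\ge m+1$ precisely when $\lfloor m/2\rfloor\ge 2$, so this value dominates the whole range and equals $\pchu(K_m,K_2)$. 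The borderline case $m=3$, where the only shapes are $(3,0)$ and $(2,1)$ and no split with $m_2\ge 2$ exists, I would dispose of by directly evaluating $\pch(K_3\odot K_1)$ and $\pch(G_{2,1})$.

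I do not anticipate a genuine obstacle: all the structural work — identifying $K_m\otimes_f K_2$ with $G_{m_1,m_2}$ and pinning down $\diam$, $\alpha$, $\alpha_2$, and hence $\pch(G_{m_1,m_2})$ in~\eqref{eq:small-cases} — has already been carried out, so what remains is careful bookkeeping with the floor/ceiling forced by the normalization $m_1\ge m_2$. The one subtlety deserving attention is ensuring that the extremal split for the maximum really falls in the branch $m_2\ge 2$, so that $2m-m_1-1$ rather than $m+1$ governs it; this is precisely why the small value $m=3$ has to be singled out.
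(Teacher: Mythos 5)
Your reduction to optimizing the two branches of~\eqref{eq:small-cases} over the admissible shapes $(m_1,m_2)$ is exactly what the paper does (its entire proof is the sentence ``determining the extreme cases in~\eqref{eq:small-cases}''), and your bookkeeping is correct as far as it goes: the first identity holds for all $m\ge 3$, and for $m\ge 4$ the balanced split $m_1=\lceil m/2\rceil$, $m_2=\lfloor m/2\rfloor$ is admissible, falls in the branch $m_2\ge 2$, and yields the maximum $2m-\lceil m/2\rceil-1\ge m+1$.

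The step you defer, however, does not ``dispose of'' the case $m=3$ --- it refutes the stated formula there. For $m=3$ the only shapes are $(3,0)$ and $(2,1)$, both in the branch $m_2\in\{0,1\}$ of~\eqref{eq:small-cases}, so $\pch(G_{3,0})=\pch(G_{2,1})=m+1=4$ (a direct check of $K_3\odot K_1$ and of $G_{2,1}$ confirms this) and hence $\pchu(K_3,K_2)=4$, whereas the displayed formula gives $2\cdot 3-\lceil 3/2\rceil-1=3$. This is also internally inconsistent with the first identity, since $\pchu(K_3,K_2)\ge\pch(K_3,K_2)=4$. So the second identity of the proposition is false at $m=3$; this is an error in the paper rather than in your method, but your proof as outlined cannot be completed for that value. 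The correct statement is $\pchu(K_m,K_2)=\max\{m+1,\,2m-\lceil m/2\rceil-1\}$, which coincides with the displayed expression exactly when $m\ge 4$; you should either restrict the second identity to $m\ge 4$ or record the value $4$ separately for $m=3$, rather than presenting the direct evaluation as a confirmation.
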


\section{Sierpi\'{n}ski products of paths and stars}
\label{sec:paths}

As it turned out, to determine the packing chromatic number of a graph is intrinsically difficult. In fact, as proved by Fiala and Golovach~\cite{fiala-2010}, the corresponding decision problem is NP-complete already on the class of trees, a very rare phenomenon. It is therefore certainly worth exploring the (upper) Sierpi\'nski packing chromatic number for Sierpi\'nski products of trees. In this section we consider in respective subsections Sierpi\'nski products of two paths, of a start by a path, of a path by a star, and of two stars. 

\subsection{Paths by paths}

The determination of the packing chromatic number of the infinite square grid has a long and exciting history. The final value 15 has been recently determined by Subercaseaux and Heule~\cite{suber-2023} using a very subtle use of computer. The value coincides with the upper bound previously obtained in~\cite{martin-2017}. This motivated us to investigate what can be said about $\pch(P_m, P_n)$ and $\pchu(P_m, P_n)$. It turns out that the Sierpi\'{n}ski packing chromatic number is not difficult, while the upper Sierpi\'{n}ski packing chromatic number is more subtle. 

\begin{theorem}
\label{thm:two paths n>=m}
If $n\ge m\ge 3$, then 
$\pch(P_m, P_n) = 3$. 
\end{theorem}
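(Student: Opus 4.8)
The plan is to prove the two inequalities $\pch(P_m,P_n)\ge 3$ and $\pch(P_m,P_n)\le 3$ separately; throughout I write $V(P_m)=\{u_1,\dots,u_m\}$ with $u_iu_{i+1}\in E(P_m)$, and let $1$ and $n$ denote the two endpoints of $P_n$. For the lower bound I would first record the elementary fact that if a graph $G$ contains a path on four vertices $v_0v_1v_2v_3$ as a subgraph, then $\pch(G)\ge 3$: in a hypothetical packing $2$-coloring $V(G)=I\cup P$ with $I$ independent and $P$ a $2$-packing, each of the adjacent pairs $v_0v_1$, $v_1v_2$, $v_2v_3$ has exactly one endpoint in $I$ and one in $P$, so $\{v_0,v_2\}$ lies in one of the two parts and $\{v_1,v_3\}$ in the other; since $d_G(v_0,v_2)\le 2$ and $d_G(v_1,v_3)\le 2$, this contradicts $P$ being a $2$-packing. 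Now, for any $f$, the graph $G=P_m\otimes_f P_n$ is connected, has $mn\ge 9$ vertices, and is not a star, because $n\ge 3$ and $u_1u_3\notin E(P_m)$ force $(u_1,2)$ and $(u_3,2)$ to be non-adjacent vertices of degree at least $2$. Since the connected $P_4$-subgraph-free graphs are precisely the stars together with $K_3$, $G$ contains $P_4$, hence $\pch(G)\ge 3$, and therefore $\pch(P_m,P_n)\ge 3$.

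For the upper bound I would exhibit a single $f$ for which $P_m\otimes_f P_n\cong P_{mn}$, so that $\pch(P_m\otimes_f P_n)=\pch(P_{mn})=3$ (as $mn\ge 9\ge 4$ and $\pch(P_k)=3$ for all $k\ge 4$). Take $f(u_i)=1$ for $i\equiv 1,2\pmod 4$ and $f(u_i)=n$ for $i\equiv 0,3\pmod 4$. The decisive property of this period-$4$ pattern is that $f$ takes only the endpoint values $1$ and $n$, while $f(u_{i-1})\ne f(u_{i+1})$ for every $i$. Consequently no vertex of $G$ is incident with two connecting edges, and for every internal $u_i$ the two connecting vertices $(u_i,f(u_{i-1}))$ and $(u_i,f(u_{i+1}))$ are exactly the two endpoints of the fiber path $u_iP_n$ (while $u_1$ and $u_m$ each contribute one connecting vertex, again located at an endpoint). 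Hence every internal fiber is entered at one of its ends and left at the other, so the $m$ fiber paths are concatenated along the $m-1$ connecting edges; since $G$ has $m(n-1)+(m-1)=mn-1$ edges on $mn$ vertices, this forces $G\cong P_{mn}$. Thus $\pch(P_m,P_n)\le 3$, and combining with the lower bound gives $\pch(P_m,P_n)=3$.

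The step that needs genuine care is this last construction: one must verify that the chosen $f$ produces no connecting vertex of degree $3$ (which is exactly the condition $f(u_{i-1})\ne f(u_{i+1})$) and that the $m-1$ connecting edges chain the fibers into a single path rather than into a cycle or a union of paths, after which the edge count $mn-1$ pins down the isomorphism $G\cong P_{mn}$. Everything else is routine; incidentally, neither the lower bound nor this construction uses the hypothesis $n\ge m$, so the same argument in fact yields $\pch(P_m,P_n)=3$ for all $m,n\ge 3$.
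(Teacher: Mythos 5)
Your proof is correct and follows essentially the same route as the paper: the lower bound via the presence of a $P_4$ subgraph, and the upper bound via exactly the same period-$4$ function $g$ (values $1,1,n,n,1,1,n,n,\dots$), for which $P_m\otimes_g P_n\cong P_{mn}$. The extra details you supply -- why a $P_4$ forces $\pch\ge 3$, why the product is not a star, and the degree/edge-count argument pinning down the isomorphism with $P_{mn}$ -- are precisely the verifications the paper leaves implicit, and they check out.
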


\proof
For any $f\in P_n^{P_m}$, the Sierpi\'{n}ski product $P_m \otimes_f P_n$ contains $P_4$ as a subgraph, hence $\pch(P_m, P_n) \ge 3$. On the other hand, let $V(K_m)=\{u_1,\dots, u_m\}$, $V(K_n) = [n]$, and let $g\in P_n^{P_m}$ be defined as
\[
g(u_i) = \left.
  \begin{cases}
    1; & i \bmod 4 \in \{1,2\}, \\
    n; & i \bmod 4 \in \{3,0\}.
  \end{cases}
  \right.
\]
Then we infer that $P_m \otimes_g P_n \cong P_{mn}$, hence $\pch(P_m \otimes_g P_n) = 3$. We  conclude that $\pch(P_m, P_n) = 3$. 
\qed

Let ${\cal T}$ be the class of all trees $T$ with the following structure. $T$ contains a path on consecutive vertices $v_1, \ldots, v_k$, and to each $v_i$ up to two disjoint paths are attached. We say that the path on the vertices $v_1, \ldots, v_k$ is the {\em spine} of $T$. 

\begin{lemma}
\label{lem:trees-T}
If $T\in {\cal T}$, then $\pch(T)\le 7$. 
\end{lemma}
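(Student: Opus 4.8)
The plan is to exhibit an explicit packing $7$-coloring of an arbitrary $T \in {\cal T}$, guided by the layered structure of $T$ around its spine $v_1, \dots, v_k$. First I would fix a periodic pattern along the spine itself: since the spine is a path, I can pre-assign colors to $v_1, v_2, \dots$ using a short repeating block that already behaves like a packing coloring of an infinite path but leaves enough ``room'' (i.e.\ enough occurrences of small colors like $1$ and $2$, appropriately spaced) to color the attached pendant paths. A natural candidate block to try is one of period $8$ that places color $1$ on every other spine vertex and distributes $2,3,4,5$ on the remaining spine vertices so that any two equal colors are at spine-distance exceeding the color; the colors $6$ and $7$ are then held in reserve for the attached paths. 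The key structural fact to exploit is that each attached path hangs off a single spine vertex $v_i$, so a vertex at depth $d$ on such a path is at distance exactly $d$ from $v_i$ and at distance $d + |i-j|$ (or more) from any vertex in the subtree rooted at $v_j$; this makes distances between ``parallel'' attached paths easy to bound from below.

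The main steps, in order, would be: (1) set up notation — write $T_i$ for the (at most two) pendant paths attached at $v_i$, and record that $d_T(x,y) = d_T(x,v_i) + d_T(v_i,v_j) + d_T(v_j,y)$ whenever $x \in T_i$, $y \in T_j$, $i \neq j$; (2) describe the spine coloring as a concrete period-$8$ (or similarly short) sequence and verify it is a valid packing coloring of the path $v_1 \dots v_k$ using only colors in $\{1,\dots,5\}$, with color $1$ on alternate vertices; (3) for a single attached path of arbitrary length hanging at $v_i$, describe how to color its vertices at depths $1, 2, 3, \dots$ — here I would again alternate color $1$ on every other vertex (depths $2,4,6,\dots$, say, so they stay at distance $\ge 2$ from the color-$1$ spine vertices, which requires choosing the phase of the spine pattern so that $v_i$ does not itself get color $1$ when it has a child — or more robustly, just putting $1$ on every vertex of the attached path at even depth and checking the depth-$1$ and parity constraints), and fill the odd-depth vertices of the attached path with colors from $\{2,3,6,7\}$ on a short period so that large colors repeat only at distance $> 7$ down the path; (4) check cross-constraints: color $1$ is fine everywhere since it only needs distance $\ge 2$ and we always alternate; for colors $2,\dots,7$, two occurrences either lie on the spine (handled in step 2), lie on the same attached path (handled in step 3), or lie on attached paths at $v_i \neq v_j$, in which case their distance is at least (depth in $T_i$) $+\, |i-j| +$ (depth in $T_j$) $\ge 1 + 1 + 1 = 3$, and by placing the reserved colors $6,7$ only at depths $\ge$ some threshold one pushes this comfortably above $7$; color $2$ needs a touch more care since distance only needs to exceed $2$, but two color-$2$ vertices on different attached paths are already at distance $\ge 3$.

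The hard part will be step (3) together with the bookkeeping in step (4): one has to choose the periodic patterns on the spine and on the attached paths \emph{simultaneously} so that a color like $3$, $4$, or $5$ used both on the spine and on some attached path never creates a violation — e.g.\ a color-$4$ vertex on an attached path at $v_i$ must be at distance $> 4$ from every color-$4$ spine vertex, which constrains at which depths color $4$ may appear relative to the position of $v_i$ in the spine period. The cleanest way around this is probably to be generous: reserve colors $5,6,7$ for use on attached paths only (never on the spine), use $\{1,2,3,4\}$ on the spine, and on attached paths use $1$ on even depths and cycle through $2,5,6,7$ (or $3,5,6,7$) on odd depths, so that the only colors shared between spine and attached paths are $1,2,3$ — and $1$ is unproblematic, while $2$ and $3$ only ever meet across the ``attachment gap'' which already gives distance $\ge 3 > 2$ but one must still separate the $3$'s carefully, so perhaps reserve $3$ for the spine as well, using only $1,2$ as shared colors. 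I would settle the exact split only after writing out one period and checking it; the argument is a finite verification once the pattern is pinned down, so the substance of the lemma is finding that pattern, and the number $7$ should come out with room to spare (indeed the true bound is likely smaller, but $7$ suffices for the later application).
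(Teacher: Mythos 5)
Your plan follows the same overall strategy as the paper's proof (a periodic pattern on the spine, periodic patterns on the attached paths, and a case analysis of where two equally colored vertices can lie), but it is not yet a proof: you explicitly defer choosing the pattern, and that choice is the entire content of the lemma. Moreover, the candidate split you settle on would fail. If the attached paths cycle through $2,5,6,7$ on odd depths with $1$ on even depths, then the two paths hanging off the \emph{same} spine vertex both place color $2$ at depth $1$, giving two $2$'s at distance $2$ (you never phase-shift the two paths at a common spine vertex, which is essential); a spine vertex colored $2$ that is adjacent to $v_i$ and a depth-$1$ child of $v_i$ colored $2$ are likewise at distance $2$; and if you push $6$ and $7$ to depth at least $4$ to protect them, the odd depths $1$ and $3$ of an attached path, which are only $2$ apart, must receive two distinct small colors that your cycle does not supply. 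The sharpest cross-constraint, which your bound ``distance $\ge 1+1+1=3$'' exactly fails to clear, concerns color $3$: two depth-$1$ vertices on paths attached to \emph{adjacent} spine vertices are at distance exactly $3$, and $3\not>3$, so color $3$ may not appear at depth $1$ at two consecutive spine positions. Any finished proof must rule this case out by design rather than by the generic additivity of distances.

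The paper's coloring makes the opposite choice from yours: the \emph{large} colors live on the spine (pattern $1,4,1,5,1,6,1,7$ repeated, so equal large colors are $8$ apart along the spine, safely exceeding $7$), while the attached paths use only $\{1,2,3\}$ with the two paths at a common spine vertex phase-shifted ($2,1,3,1,\ldots$ versus $3,1,2,1,\ldots$). The insight you are missing is that along the spine one fully controls spacing and can therefore afford the colors with strong distance requirements, whereas attached paths at adjacent spine vertices are unavoidably within distance $3$ of one another, so only colors with weak requirements may be shared among them. To complete your argument you must commit to a concrete pattern and carry out the finite verification, paying particular attention to the depth-$1$ versus depth-$1$ interactions across adjacent spine vertices described above; this is exactly where loosely specified patterns (including a naive placement of color $3$ at depth $1$) break down.
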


\proof
Let $v_1, \ldots, v_k$ be the consecutive vertices of the spine of $T$. Color the vertices of the spine using the pattern $1, 4, 1, 5, 1, 6, 1, 7, 1, 4, 1, 5, 1, 6, 1, 7, \ldots$, and let $c(v_i)$ be this color at $v_i$. For $i\in [k]$, let $Q_i$ and $Q_i'$ be the paths attached to $v_i$, where it is possible that one or both of $Q_i$ and $Q_i'$ are isomorphic to $P_1$. Color the vertices of $Q_i$ by the pattern $c(v_i), 
2, 1, 3, 1, 2, 1, 3, 1, \ldots$, and the vertices of $Q_i'$ by the pattern $c(v_i), 
3, 1, 2, 1, 3, 1, 2, 1, \ldots$ It is now straightforward to check that $c$ is a packing coloring of $T$, hence the conclusion. 
\qed

If $T$ is a tree and $x,y\in V(T)$, then let $x \rightsquigarrow y$ denote the unique ($x,y$)-path in $T$. In the case when $xy\in E(T)$, we simplify this notation to $x\rightarrow y$.

\begin{lemma}
\label{lem:path-by-path-is-in-T}
If $n\ge 2$, $m\ge 2$, and $f\in P_n^{P_m}$, then $P_m \otimes_f P_n\in {\cal T}$. 
\end{lemma}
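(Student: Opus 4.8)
The plan is to describe $G := P_m \otimes_f P_n$ explicitly and then exhibit a spine. Write $V(P_m) = \{u_1, \dots, u_m\}$ with $u_iu_{i+1} \in E(P_m)$ for $i \in [m-1]$, and $V(P_n) = [n]$ with consecutive integers adjacent. The Type-$1$ edges turn each fiber $u_iP_n$ into a copy of the path $(u_i,1) \to (u_i,2) \to \cdots \to (u_i,n)$ and introduce no other edges inside a fiber; the Type-$2$ edges are exactly the $m-1$ \emph{connecting edges} $e_i := (u_i, f(u_{i+1}))\,(u_{i+1}, f(u_i))$, $i \in [m-1]$. Hence $G$ is connected on $mn$ vertices with $m(n-1) + (m-1) = mn-1$ edges, so $G$ is a tree. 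In the fiber $u_iP_n$ the connecting vertices are $(u_i, f(u_{i-1}))$ (present when $i \ge 2$) and $(u_i, f(u_{i+1}))$ (present when $i \le m-1$); for an inner index $i$ these coincide precisely when $f(u_{i-1}) = f(u_{i+1})$.

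First I would build a spine $P$ by threading through the fibers in order: inside $u_1P_n$ let $P$ run from the endpoint $(u_1,1)$ to the connecting vertex $(u_1, f(u_2))$; cross $e_1$; inside each inner fiber $u_iP_n$ ($2 \le i \le m-1$) let $P$ run along $u_iP_n$ from $(u_i, f(u_{i-1}))$ to $(u_i, f(u_{i+1}))$ and cross $e_i$; finally, inside $u_mP_n$ let $P$ run from $(u_m, f(u_{m-1}))$ to the endpoint $(u_m,1)$. Since the fibers are pairwise disjoint and inside each fiber $P$ traverses a subpath of $u_iP_n$, the walk $P$ is a simple path; take it as the spine $v_1,\dots,v_k$ of $G$.

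Next I would account for the vertices off the spine: each lies in some fiber $u_iP_n$, in one of the at most two subpaths obtained by deleting from $u_iP_n$ the sub-interval used by $P$. Every such leftover subpath consists of non-connecting vertices only, so it is a path joined to the rest of $G$ by the single edge at one of its ends, that end being a connecting vertex of fiber $i$; hence it is a path attached to the spine at exactly one vertex. To finish, I would bound the number of attached paths per spine vertex. Inside an inner fiber $u_iP_n$: if $f(u_{i-1})\ne f(u_{i+1})$ the sub-interval used by $P$ has two distinct endpoints $(u_i,f(u_{i-1}))$, $(u_i,f(u_{i+1}))$ with at most one leftover subpath at each and none at the interior vertices; if $f(u_{i-1})=f(u_{i+1})$ it is the single vertex $(u_i,f(u_{i-1}))$, carrying at most two leftover subpaths. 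The boundary fibers $u_1P_n$ and $u_mP_n$ each contribute at most one attached path, at $(u_1,f(u_2))$ and $(u_m,f(u_{m-1}))$ respectively. As leftover subpaths of fiber $i$ hang only from vertices of $u_iP_n$, different fibers do not interfere, so every spine vertex has at most two attached paths and $G\in{\cal T}$.

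The only points needing care are the routine checks that $P$ is simple and that the leftover subpaths exhaust the off-spine vertices, together with the small case analysis when a sub-interval degenerates to a single vertex or a connecting vertex lies at an end of its fiber; I expect no genuine obstacle, merely bookkeeping over the two boundary fibers and the coincidence $f(u_{i-1}) = f(u_{i+1})$.
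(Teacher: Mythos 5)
Your proof is correct and follows essentially the same route as the paper's: you thread a spine through the fibers via the connecting vertices and connecting edges, and check that what is left over in each fiber is at most two pendant paths hanging from spine vertices, with the same case distinction for $f(u_{i-1})=f(u_{i+1})$. The only cosmetic differences are that you extend the spine to the fiber endpoints $(u_1,1)$ and $(u_m,1)$ rather than starting and ending at the connecting vertices, and that you make the tree structure of $P_m\otimes_f P_n$ explicit via an edge count, both of which are harmless.
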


\proof
Let $V(P_m)=\{u_1,\dots, u_m\}$, $V(P_n) = [n]$, $f\in P_n^{P_m}$, and let $G=P_m \otimes_f P_n$. Let $Q$ be the following path in $G$: 
\begin{align*}
(u_1,f(u_2)) & \rightarrow (u_2,f(u_1)) \rightsquigarrow (u_2,f(u_3)) \\
& \rightarrow (u_3,f(u_2)) \rightsquigarrow (u_3,f(u_4))  \\
& \ \ \vdots \\
& \rightarrow (u_{m-1},f(u_{m-2})) \rightsquigarrow (u_{m-1},f(u_{m})) \\
& \rightarrow (u_{m},f(u_{m-1}))\,. 
\end{align*}
Then we claim that $G\in {\cal T}$, where $Q$ is its spine. For this sake note first that if $j\in \{2,\ldots, m-1\}$, then each internal vertex of the subpath $(u_j,f(u_{j-1})) \rightsquigarrow (u_j,f(u_{j+1}))$ is of degree $2$. (It is possible that $f(u_{j-1}) = f(u_{j+1})$, in which case there is no such internal vertex.) Moreover, each of the vertices of $Q$ written in its definition, is either of degree $2$, or has one or two pendant paths attached to it. We conclude that $G\in {\cal T}$.
\qed

Combining the above results brings us to: 

\begin{theorem} 
\label{thm:paths-paths-upper}
If $n\ge 3$ and $m\ge 12$, then $\pchu(P_m, P_n) \in \{6,7\}$. 
\end{theorem}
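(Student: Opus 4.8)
The plan is to prove the two bounds $\pchu(P_m, P_n) \le 7$ and $\pchu(P_m, P_n) \ge 6$ separately; both are short given the machinery already developed.

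For the upper bound I would simply combine Lemmas~\ref{lem:path-by-path-is-in-T} and~\ref{lem:trees-T}: for every $f \in P_n^{P_m}$ we have $P_m \otimes_f P_n \in {\cal T}$, and every member of ${\cal T}$ has packing chromatic number at most $7$. Taking the maximum over all $f$ yields $\pchu(P_m, P_n) \le 7$, with no hypothesis beyond $m,n \ge 2$ needed.

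For the lower bound I would exhibit a single function realizing value at least $6$. Let $V(P_m) = \{u_1,\dots,u_m\}$, $V(P_n) = [n]$, and take $f$ to be the constant function with value $2$ (admissible since $n \ge 3$). With this choice the Type-$2$ edge associated with $u_j u_{j+1}$ is $(u_j,2)(u_{j+1},2)$, so the vertices $(u_1,2),\dots,(u_m,2)$ span a path, while inside each fiber $u_j P_n$ the edges $(u_j,1)(u_j,2)$ and $(u_j,2)(u_j,3)$ attach two further vertices to $(u_j,2)$; no connecting edge joins two of the vertices $(u_j,1),(u_j,3)$ across distinct fibers. Hence the subgraph of $P_m \otimes_f P_n$ induced by $\{(u_j,i) : j\in[m],\ i\in\{1,2,3\}\}$ is isomorphic to the $2$-corona $P_m \odot 2K_1$ (when $n=3$ this subgraph is all of $P_m \otimes_f P_n$). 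I would then invoke the standard monotonicity of the packing chromatic number under subgraphs: if $H$ is a subgraph of $G$ then $d_H(x,y) \ge d_G(x,y)$ for all $x,y\in V(H)$, so the restriction of any packing coloring of $G$ is a packing coloring of $H$, whence $\pch(H) \le \pch(G)$. Applying this with $H = P_m \odot 2K_1$ and using Theorem~\ref{thm:laiche} (applied with path length $m \ge 12$), which gives $\pch(P_m \odot 2K_1) = 6$, we obtain $\pch(P_m \otimes_f P_n) \ge 6$, hence $\pchu(P_m, P_n) \ge 6$. Combined with the upper bound this gives $\pchu(P_m, P_n) \in \{6,7\}$.

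There is no genuinely hard step here; the effort has already been spent on Lemmas~\ref{lem:trees-T} and~\ref{lem:path-by-path-is-in-T} and on the cited corona results. The only points deserving a line of care are (i) verifying that the indicated vertex subset really induces $P_m \odot 2K_1$ — i.e.\ that the two non-spine vertices of each fiber are leaves and that no extra connecting edge appears among them — and (ii) recording the subgraph-monotonicity of $\pch$, which is routine but should be stated explicitly. It is worth noting that the statement deliberately leaves open whether the value equals $6$ or $7$; pinning this down exactly would require a finer analysis depending on $m$ and $n$, which the present statement does not demand.
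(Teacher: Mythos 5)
Your proposal is correct and follows essentially the same route as the paper: the upper bound via Lemmas~\ref{lem:path-by-path-is-in-T} and~\ref{lem:trees-T}, and the lower bound by taking the constant function with value $2$, recognizing the induced copy of $P_m\odot 2K_1$, and applying Theorem~\ref{thm:laiche} together with subgraph-monotonicity of $\pch$ (which the paper uses implicitly and you rightly make explicit).
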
 

\proof
From Lemmas~\ref{lem:trees-T} and~\ref{lem:path-by-path-is-in-T} we deduce that $\pchu(P_m, P_n) \le 7$ for every $n\ge 2$ and $m\ge 2$. 

Let now $n = 3$, $m\ge 12$, $V(P_m)=\{u_1,\dots, u_m\}$, $V(P_3) = [3]$, and let $g\in P_3^{P_m}$ be defined by $g(u_i) = 2$ for $i\in [m]$. Then it is clear that $P_m \otimes_g P_3 \cong P_m\odot 2K_1$, hence Theorem~\ref{thm:laiche} implies that $\pch(P_m \otimes_g P_3) = 6$. This in turn implies that $\pchu(P_m, P_3) \ge 6$. If $n> 3$, then, using the same function $g$ as above, we see that $P_m \otimes_g P_3$ is an induced subgraph of $P_m \otimes_g P_n$, hence $\pchu(P_m, P_n) \ge 6$ holds for any $n\ge 3$. 

Combining the above findings we conclude that $\pchu(P_m, P_n) \in \{6,7\}$. 
\qed

Note that if $T\in {\cal T}$, then $\Delta(T)\le 4$. From Sloper's result~\cite[Theorem 15]{sloper-2004}, it follows that if $T$ is a tree with $\Delta(T) \leq 3$, then $\chi_\rho(T) \leq 7$, see~\cite[Theorem 2.18]{bresar-2020}. For a general tree $T$, however, we know by~\cite[Theorem 5.4]{goddard-2008} that $\chi_\rho(T) \leq (n(T)+7)/4$, except when $n(T)\in\{4, 8\}$, and that the bounds are sharp.

\subsection{Stars by paths}

Here we investigate the case when the base graph is a star $K_{1,m}$ and the fiber is a path $P_n$. Since $K_{1,2}\simeq P_3$, to avoid trivial cases, we may assume that $m\ge 3$ and $n\ge 2$.

\begin{theorem} \label{pchu in 6,7}
If $m\ge 3$ and $n\ge 2$, then $\pch(K_{1,m}, P_n)=3$ and $\pchu(K_{1,m}, P_n)\le 7$. In addition, if $m>2n \ge 24$, then $\pchu(K_{1,m}, P_n)\in\{6,7\}$. 
\end{theorem}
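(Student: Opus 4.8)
The plan is to prove the three assertions of Theorem~\ref{pchu in 6,7} separately, each resting on identifying the structure of $K_{1,m}\otimes_f P_n$ and then invoking results already available.

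\textbf{Lower bound $\pch(K_{1,m},P_n)=3$ and the universal upper bound $\pchu\le 7$.} First I would observe that for every $f\in P_n^{K_{1,m}}$ the graph $K_{1,m}\otimes_f P_n$ contains $P_4$ as a subgraph (as soon as $n\ge 2$ and $m\ge 3$, each fiber is a $P_n$ and the connecting edges attach additional vertices, so a path on four vertices is easily found), giving $\pch(K_{1,m}\otimes_f P_n)\ge 3$ for every $f$, hence $\pch(K_{1,m},P_n)\ge 3$. For the matching upper bound I would exhibit one specific $f$ realizing value $3$: taking $f$ constant equal to an endpoint of $P_n$ should make $K_{1,m}\otimes_f P_n$ a caterpillar whose spine is a path and whose legs are paths, in fact it should be a tree in the class ${\cal T}$ of Lemma~\ref{lem:trees-T}; but value $3$ needs something sharper than Lemma~\ref{lem:trees-T}, so instead I would choose $f$ so that the product is literally a path (e.g. send the center and all leaves to the same endpoint vertex $1$ of $P_n$, or arrange the connecting edges to string the fibers into one long path), and then $\pch=3$ because a path of length $\ge 3$ has packing chromatic number $3$. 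The bound $\pchu(K_{1,m},P_n)\le 7$ follows once I show $K_{1,m}\otimes_f P_n\in{\cal T}$ for \emph{every} $f$: the product consists of $m+1$ copies of $P_n$, one center copy $u_0P_n$ and $m$ leaf copies, each leaf copy joined to the center copy by exactly one connecting edge; the center copy is a path, and each leaf copy is a path attached to the center path at a single vertex, which splits it into at most two pendant subpaths — exactly the structure required for membership in ${\cal T}$ (the spine being the center copy $u_0P_n$). Then Lemma~\ref{lem:trees-T} gives $\pch\le 7$, hence $\pchu(K_{1,m},P_n)\le 7$.

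\textbf{The lower bound $\pchu(K_{1,m},P_n)\ge 6$ when $m>2n\ge 24$.} Here the idea is to choose a bad $f$ that forces six colors. The natural candidate, mirroring the path-by-path argument in Theorem~\ref{thm:paths-paths-upper}, is an $f$ for which $K_{1,m}\otimes_f P_n$ contains an induced copy of $P_n\odot 2K_1$ — or more precisely a subdivided/coronal structure to which Theorem~\ref{thm:laiche} (or Theorem~\ref{thm:laiche 3K_1}, Theorem~\ref{thm:laiche pK_1}) applies — built around one leaf copy of $P_n$ acting as a ``base path'' with many other leaf copies hanging off it through the center. Concretely: route all connecting edges so that one distinguished leaf fiber $u_1P_n$ becomes a path each of whose vertices receives, via the center fiber, pendant paths coming from other leaf fibers; the hypothesis $m>2n$ guarantees there are more than $2n$ leaves, so by pigeonhole one can arrange that some vertex of the base path ends up with the requisite number of pendant neighbors (at least $2$, giving a $P_n\odot 2K_1$ subgraph; with enough leaves even $P_n\odot pK_1$), and $n\ge 12$ then pushes the packing chromatic number of that induced subgraph to $6$ by Theorem~\ref{thm:laiche}. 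Since packing chromatic number is monotone under taking induced subgraphs, $\pch(K_{1,m}\otimes_f P_n)\ge 6$ for this $f$, whence $\pchu(K_{1,m},P_n)\ge 6$. Combining with $\pchu\le 7$ yields $\pchu(K_{1,m},P_n)\in\{6,7\}$.

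\textbf{Main obstacle.} The delicate part is the lower bound $\pchu\ge 6$: I must pin down, for a cleverly chosen $f$, that $K_{1,m}\otimes_f P_n$ really contains $P_n\odot 2K_1$ (or a graph of equal or larger packing chromatic number) as an \emph{induced} subgraph, and that the constants in the hypothesis $m>2n\ge 24$ are exactly what makes the combinatorics work — in particular tracking how the single connecting edge per leaf fiber limits how the pendant paths attach, and checking that $n\ge 12$ is the threshold delivered by Theorem~\ref{thm:laiche}. Verifying membership in ${\cal T}$ for all $f$ (needed for the universal upper bound) is routine by contrast: it is just the remark that each leaf copy contributes a single attachment point to the center spine and hence at most two pendant paths, so no vertex acquires a third branch.
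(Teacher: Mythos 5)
Your skeleton (the $P_4$ lower bound, a structural description of $K_{1,m}\otimes_f P_n$, and an induced $P_n\odot 2K_1$ for the lower bound $6$) matches the paper's, but two of your key structural claims are false, and they carry the weight of the argument.

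First, no $f$ makes $K_{1,m}\otimes_f P_n$ a path when $m\ge 3$: all $m$ connecting edges emanate from the single center fiber $u_1P_n$, whose only degree-one vertices are its two endpoints, so some vertex of $u_1P_n$ must acquire degree at least $3$. Your fallback (constant $f$ mapping everything to an endpoint of $P_n$) produces a spider with a vertex of degree $m+1$, not a path, so "a path has packing chromatic number 3" does not finish the job; you would still need to exhibit a packing $3$-coloring of that spider (the paper does this explicitly: center gets $2$, odd-distance vertices get $1$, distance $\equiv 2\pmod 4$ gets $3$, distance $\equiv 0\pmod 4$ gets $2$).

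Second, and more seriously, $K_{1,m}\otimes_f P_n$ is \emph{not} in ${\cal T}$ for every $f$. If many leaves $u_j$ of the star satisfy $f(u_j)=h$, then the single spine vertex $(u_1,h)$ receives that many connecting edges, hence arbitrarily many attached branches (and each branch is a subdivided claw when $f(u_1)$ is an internal vertex of $P_n$, not a pendant path). Membership in ${\cal T}$ forces maximum degree at most $4$ and at most two pendant \emph{paths} per spine vertex, so Lemma~\ref{lem:trees-T} simply does not apply, and your universal bound $\pchu(K_{1,m},P_n)\le 7$ is unproved. The paper instead gives a direct coloring tailored to this structure: the center path gets the pattern $2,4,3,5,2,6,3,7,\dots$ (no spine vertex is colored $1$), and each attached structure gets $1,2,1,3,\dots$ or $1,3,1,2,\dots$ according to the color of its attachment vertex; keeping $1$ off the spine is exactly what allows unboundedly many attachments at one spine vertex. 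Your lower-bound construction is also garbled (pendant fibers cannot hang off a leaf fiber, since each leaf fiber has only one connecting edge; the base path of the corona must be the center fiber, and \emph{every} one of its $n$ vertices must receive two pendants, which is what consumes $2n$ leaves), though the intended argument — an induced $P_n\odot 2K_1$ plus Theorem~\ref{thm:laiche} and monotonicity of $\pch$ under induced subgraphs — is the paper's.
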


\proof
Let $V(K_{1,m})=\{u_1,\dots, u_{m+1}\}$ with $u_1$ as the center, and let $V(P_n)=[n]$. 

First we investigate $\pch(K_{1,m}, P_n)$. Since for every $f\in  P_n^{K_{1,m}}$, the graph $K_{1,m} \otimes_f P_n$ contains $P_4$ as a subgraph, we have $\pch(K_{1,m}, P_n) \ge 3$. On the other hand, consider a constant function  $g\in  P_n^{K_{1,m}}$ with $g(u_i)=1$. Let $x$ be the vertex of $G_f$ of degree $m+1$, and note that $G_f$ consists of $m+1$ paths attached to $x$.  We color $x$ with $2$, vertices at odd distance from $x$ with $1$, vertices at distance $2 \mbox{(mod 4)}$ from $x$ with $3$ and vertices at distance $0 \mbox{(mod 4)}$ from $x$ with color $2$.  Obviously, such a coloring is a packing coloring of $G_g$. Hence we also have $\pch(K_{1,m}, P_n) \le 3$ and can conclude that $\pch(K_{1,m}, P_n) = 3$.

\smallskip

Now we investigate $\pchu(K_{1,m}, P_n)$. Consider an arbitrary function $f\in  P_n^{K_{1,m}}$. Then $K_{1,m} \otimes_f P_n$ is the graph consisting of the path $u_1P_n$ to which $m$ paths of length $n$ are attached at some vertices. (Possibly no path or more than one path is attached to each vertex of $u_1P_n$.) Then we color the vertices of $u_1P_n$ with the pattern 
\begin{quote}
$2,4,3,5,2,6,3,7,2,4,3,5,2,6,3,7,\dots$, 
\end{quote}
and the attached paths with patterns
\begin{quote}
$1,2,1,3,1,2,1,3,\dots$ \quad and \quad $1,3,1,2,1,3,1,2,\dots$,
\end{quote}
where the first pattern is used on those paths whose attached vertex is colored with a color from $\{3,4,5,6,7\}$, and the second pattern when the vertex of attachment is colored with $2$. We infer that such a coloring is a packing coloring of $K_{1,m} \otimes_f P_n$. Indeed, since the latter pattern is used only for vertices of $u_1P_n$ with color $2$, colors $2$ and $3$ are never adjacent on $u_1P_n$, and the vertices with color $3$ on distinct paths with the same vertex of attachment are always at distance at least $2$ from $u_1P_n$, that is, at least $4$ apart. We may conclude that $\pchu(K_{1,m}, P_n) \le 7$.

Assume now that $m > 2n \ge 24$. Consider a function $f\in  P_n^{K_{1,m}}$ such that $f(u_1)=1$, $f(u_{2i})=f(u_{2i+1})=i$, $i \in [n]$, and the remaining vertices of $K_{1,m}$ have arbitrary function values. Then $K_{1,m} \otimes_f P_n$ contains $P_{n}\odot 2K_1$ as an induced subgraph. Since $n\geq 12$, Theorem~\ref{thm:laiche} implies that $\pch(K_{1,m} \otimes_f P_n)\geq 6$ and consequently $\pchu(K_{1,m}, P_n) \geq 6$. 
\qed

\subsection{Paths by stars}

We now deal with the case when the base graph is a path $P_m$ and the fiber is a star $K_{1,n}$. Again, since $K_{1,2}\simeq P_3$, to avoid trivial cases, we consider $n\ge 3$ and $m\ge 2$.

\begin{theorem}
\label{thm:path-star}
If $m\ge 2$ and $n\ge 3$, then $\pch(P_m, K_{1,n}) = 3$ and $\pchu(P_m, K_{1,n}) \le 9$. Moreover, if  $n=3$ and $m\ge 10$, then $\pchu(P_m, K_{1,n}) \in \{6,7,8,9\}$, and if $n>3$ and $m\ge 35$, then $\pchu(P_m, K_{1,n}) \in \{7,8,9\}$.
\end{theorem}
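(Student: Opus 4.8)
The plan is to handle the three assertions separately. For the lower bound $\pch(P_m, K_{1,n}) \ge 3$, I would note that $P_m \otimes_f K_{1,n}$ always contains $P_4$ (since $m\ge 2$ gives at least two fibers joined by a connecting edge, and a star with $n\ge 3$ leaves has diameter $2$, so one readily finds a $4$-vertex path), whence $\pch \ge 3$ by the standard fact that $\pch(P_4)=3$. For the matching upper bound I would exhibit a single convenient $f$: take $f$ constant equal to the center of $K_{1,n}$. Then in every fiber $u_iK_{1,n}$ the center $c_i$ is the unique connecting vertex, so $P_m\otimes_f K_{1,n}$ is obtained from $P_m$ (the path $c_1c_2\cdots c_m$) by attaching $n$ pendant leaves to each $c_i$; this is the $n$-corona $P_m\odot nK_1$ in disguise, but more simply it is a caterpillar, and one checks directly that coloring the spine with the alternating pattern $1,2,1,2,\ldots$ is illegal (color $2$ at distance $2$), so instead color the spine $c_i$ by $2$ if $i$ is odd and by $3$ if $i\equiv 0\pmod 4$, $1$ otherwise — actually the clean way is: color all leaves with $1$, color $c_i$ with $2$ when $i$ is odd, and alternate $c_i\in\{2,3\}$ on the even indices so that two vertices colored $2$ are at distance $\ge 3$ and two colored $3$ are at distance $\ge 4$. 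This gives a packing $3$-coloring, so $\pch(P_m, K_{1,n}) \le 3$, completing the first claim.

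For the universal upper bound $\pchu(P_m, K_{1,n})\le 9$, I would describe the structure of an arbitrary $G_f = P_m\otimes_f K_{1,n}$. With $V(P_m)=\{u_1,\dots,u_m\}$ each fiber $u_jK_{1,n}$ is a star, and the connecting edges (at most two per internal fiber, one per end fiber) attach either at the center or at a leaf of that star. The graph $G_f$ therefore has a "central-ish" path-like backbone $Q$ — analogous to the spine in Lemma~\ref{lem:path-by-path-is-in-T}, built from the connecting vertices — together with stars (and short pendant paths of length $\le 2$ created when a connecting edge lands on a leaf) hanging off $Q$. The key point is that $\Delta(G_f)\le n+2$ can be large, so I cannot invoke the $\Delta\le 3$ tree bound; instead I would give an explicit coloring. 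Color the backbone $Q$ with a period-$8$ pattern over the five colors $\{5,6,7,8,9\}$ interleaved with $1$'s where possible, in the spirit of the pattern $1,5,1,6,1,7,1,8,1,9,\ldots$ used in Lemma~\ref{lem:trees-T} but long enough to accommodate the backbone vertices that cannot get $1$ because they are centers of attached stars; then at each backbone vertex $v$ color the attached star by putting a color from $\{2,3,4\}$ on its center (or on the attachment leaf) and $1$ on the remaining leaves, rotating among $2,3,4$ between consecutive attached stars and between the (at most two) stars sharing a backbone vertex, so that any two vertices colored $i\in\{2,3,4\}$ are more than $i$ apart. A careful bookkeeping of the at-most-two-per-vertex attachments and the diameter-$3$ local structure shows $9$ colors suffice; this verification is the routine-but-delicate heart of this part.

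For the lower bounds in the "moreover" clauses I would again pick $f$ to embed a known hard caterpillar as an induced subgraph. When $n=3$, choosing $f$ so that in a long stretch of consecutive fibers the connecting edges alternately hit the center and stay put forces $P_{m}\odot 2K_1$ (for $m\ge 10$, so $\pch\ge 5$ via Theorem~\ref{thm:laiche}), and a slightly cleverer $f$ — letting a leaf of each fiber be a connecting vertex so that three pendant edges accumulate at backbone vertices — forces $P_{k}\odot 3K_1$ with $k$ growing linearly in $m$, giving $\pch \ge 6$ by Theorem~\ref{thm:laiche 3K_1} once $m\ge 10$; hence $\pchu(P_m,K_{1,3})\ge 6$ and, combined with the $\le 9$ bound, lies in $\{6,7,8,9\}$. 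When $n>3$ the fiber has $\ge 4$ leaves, so an analogous $f$ produces an induced $P_k\odot pK_1$ with $p\ge 4$ and $k$ linear in $m$; for $m\ge 35$ this makes $k\ge 35$ and Theorem~\ref{thm:laiche pK_1} gives $\pch\ge 7$, whence $\pchu(P_m,K_{1,n})\in\{7,8,9\}$. The main obstacle is the $\pchu\le 9$ argument: designing one coloring pattern robust against every placement of the (up to two) connecting edges per fiber — in particular handling the case where both connecting edges of an internal fiber attach at leaves, producing two pendant $P_2$'s on the backbone in addition to the star's remaining leaves — and verifying no color class violates its distance constraint; getting the constant down to exactly $9$ (rather than a looser bound) is where the case analysis must be done with care.
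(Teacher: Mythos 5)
There is a genuine gap, and it is in the very first part. Your construction for $\pch(P_m,K_{1,n})\le 3$ takes $f$ constant equal to the center of $K_{1,n}$, which makes $P_m\otimes_f K_{1,n}\cong P_m\odot nK_1$. But this is exactly the function the paper uses to prove the \emph{lower} bounds on $\pchu$: by Theorems~\ref{thm:laiche 3K_1} and~\ref{thm:laiche pK_1}, $\pch(P_m\odot nK_1)\ge 4$ already for $m\ge 3$ (and $\ge 6$ or $\ge 7$ for large $m$), so this choice of $f$ cannot witness $\pch\le 3$. One sees the obstruction directly: among the $n\ge 3$ leaves of any spine vertex $c_i$, at most one can receive color $2$ and at most one color $3$ (they are pairwise at distance $2$), so some leaf of $c_i$ gets color $1$ and hence $c_i\notin\{1\}$; then three consecutive spine vertices $c_1,c_2,c_3$ all carry colors from $\{2,3\}$ while being pairwise at distance $\le 2$, which is impossible. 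Your explicit coloring is also invalid on its face: assigning color $2$ to $c_i$ for every odd $i$ puts color $2$ on vertices at distance $2$. The paper avoids this by choosing a non-constant $f$ (periodic with period $8$, alternating between center and leaf images) so that the connecting edges weave through the stars, almost all vertices have degree at most $2$, and a genuine $\{1,2,3\}$ packing coloring exists.

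The remaining parts are closer to the paper but incomplete or loose. For $\pchu\le 9$ you describe the same backbone-plus-attached-stars strategy the paper uses, but you do not produce a verified pattern; the paper's proof rests on an explicit period-$64$ sequence on the colors $3,\dots,9$ (checked by computer) with all leaves colored $1$ and all remaining star centers colored $2$, and the correctness of the $2$-class there depends on those centers being off the backbone — this is precisely the ``delicate heart'' you defer. For the lower bounds, the direct route is again the constant-center $f$, which yields $P_m\odot nK_1$ with spine length exactly $m$; your plan to engineer an induced $P_k\odot pK_1$ with $k$ only ``linear in $m$'' risks $k<35$ when $m=35$ and would not deliver the stated thresholds.
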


\proof
Let $V(P_m)=\{u_1,\dots, u_{m}\}$, and let $V(K_{1,n})=[n+1]$, where $1$ denotes the center of the star. Since for any $f\in  K_{1,n}^{P_m}$, the Sierpi\'nski product $P_m \otimes_f K_{1,n}$ contains a $P_4$ as a subgraph, we have $\pch(P_m, K_{1,n}) \ge 3$. Now consider a function  $g\in K_{1,n} ^{P_m}$ defined by
\[
g(u_i) = \left.
  \begin{cases}
    1; & i\equiv 1 (\mbox{mod }4), \\
    2; & i\equiv k (\mbox{mod } 8),\ k\in\{2,4\}, \\  
    3; & i\equiv k (\mbox{mod }8),\ k\in\{6,0\}, \\ 
    4; & i\equiv 3 (\mbox{mod }4).
  \end{cases}
  \right.
\]
The structure of $P_m \otimes_g K_{1,n}$ is illustrated in Fig~\ref{Fig path-star} for the case $m=14$ and $n=3$.

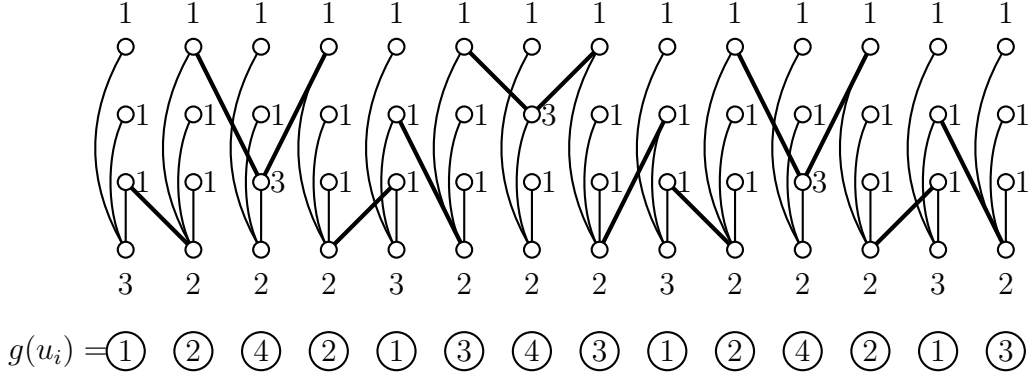
\begin{figure}[ht!]
\begin{center}
\begin{tikzpicture}[scale=0.60,style=thick,x=1.5cm,y=1.5cm]
\def\vr{5pt}

\begin{scope}[xshift=0cm, yshift=0cm] 
\coordinate(a1) at (0,0);
\coordinate(b1) at (0,1);
\coordinate(c1) at (0,2);
\coordinate(d1) at (0,3);
\coordinate(a2) at (1,0);
\coordinate(b2) at (1,1);
\coordinate(c2) at (1,2);
\coordinate(d2) at (1,3);
\coordinate(a3) at (2,0);
\coordinate(b3) at (2,1);
\coordinate(c3) at (2,2);
\coordinate(d3) at (2,3);
\coordinate(a4) at (3,0);
\coordinate(b4) at (3,1);
\coordinate(c4) at (3,2);
\coordinate(d4) at (3,3);
\coordinate(a5) at (4,0);
\coordinate(b5) at (4,1);
\coordinate(c5) at (4,2);
\coordinate(d5) at (4,3);
\coordinate(a6) at (5,0);
\coordinate(b6) at (5,1);
\coordinate(c6) at (5,2);
\coordinate(d6) at (5,3);
\coordinate(a7) at (6,0);
\coordinate(b7) at (6,1);
\coordinate(c7) at (6,2);
\coordinate(d7) at (6,3);
\coordinate(a8) at (7,0);
\coordinate(b8) at (7,1);
\coordinate(c8) at (7,2);
\coordinate(d8) at (7,3);
\coordinate(a9) at (8,0);
\coordinate(b9) at (8,1);
\coordinate(c9) at (8,2);
\coordinate(d9) at (8,3);
\coordinate(a10) at (9,0);
\coordinate(b10) at (9,1);
\coordinate(c10) at (9,2);
\coordinate(d10) at (9,3);
\coordinate(a11) at (10,0);
\coordinate(b11) at (10,1);
\coordinate(c11) at (10,2);
\coordinate(d11) at (10,3);
\coordinate(a12) at (11,0);
\coordinate(b12) at (11,1);
\coordinate(c12) at (11,2);
\coordinate(d12) at (11,3);
\coordinate(a13) at (12,0);
\coordinate(b13) at (12,1);
\coordinate(c13) at (12,2);
\coordinate(d13) at (12,3);
\coordinate(a14) at (13,0);
\coordinate(b14) at (13,1);
\coordinate(c14) at (13,2);
\coordinate(d14) at (13,3);

\foreach \i in {1,...,14}
{ 
\draw (a\i) -- (b\i);
\draw (a\i) .. controls (-0.3+\i-1,0.5) and (-0.3+\i-1,1.5) .. (c\i);
\draw (a\i) .. controls (-0.6+\i-1,1) and (-0.6+\i-1,2) .. (d\i);
}
\draw[ultra thick] (b1) -- (a2); 
\draw[ultra thick] (d2) -- (b3); 
\draw[ultra thick] (b3) -- (d4); 
\draw[ultra thick] (a4) -- (b5); 
\draw[ultra thick] (c5) -- (a6); 
\draw[ultra thick] (d6) -- (c7); 
\draw[ultra thick] (c7) -- (d8); 
\draw[ultra thick] (a8) -- (c9); 
\draw[ultra thick] (b9) -- (a10); 
\draw[ultra thick] (d10) -- (b11); 
\draw[ultra thick] (b11) -- (d12); 
\draw[ultra thick] (a12) -- (b13); 
\draw[ultra thick] (c13) -- (a14); 

\foreach \i in {0,...,13}
{
\draw (\i,-1.5)[fill=white] circle(12pt); 
}

\foreach \i in {1,...,14}
{ 
\draw(a\i)[fill=white] circle(\vr);
\draw(b\i)[fill=white] circle(\vr);
\draw(c\i)[fill=white] circle(\vr);
\draw(d\i)[fill=white] circle(\vr);
}
\node at (-1,-1.5) {$g(u_i)=$}; 
\node at (0,-1.5) {$1$};
\node at (1,-1.5) {$2$};
\node at (2,-1.5) {$4$};
\node at (3,-1.5) {$2$};
\node at (4,-1.5) {$1$};
\node at (5,-1.5) {$3$};
\node at (6,-1.5) {$4$};
\node at (7,-1.5) {$3$};
\node at (8,-1.5) {$1$};
\node at (9,-1.5) {$2$};
\node at (10,-1.5) {$4$};
\node at (11,-1.5) {$2$};
\node at (12,-1.5) {$1$};
\node at (13,-1.5) {$3$};

\foreach \i in {0,...,13}
{
\node at (\i,3.5) {$1$};
}
\foreach \i in {0,...,5,7,8,...,13}
{
\node at (\i+.25,2) {$1$};
}
\foreach \i in {0,1,3,4,...,9,11,12,13}
{
\node at (\i+.25,1) {$1$};
}
\foreach \i in {1,2,3,5,6,7,9,10,11,13}
{
\node at (\i,-0.5) {$2$};
}
\node at (6.25,2) {$3$};
\node at (2.25,1) {$3$};
\node at (10.25,1) {$3$};
\node at (0,-.5) {$3$};
\node at (4,-.5) {$3$};
\node at (8,-.5) {$3$};
\node at (12,-.5) {$3$};

\end{scope}
\end{tikzpicture}
\caption{The Sierpi\'{n}ski product $P_{14} \otimes_g K_{1,3}$.}
\label{Fig path-star}
\end{center}
\end{figure}

Color all leaves and all vertices of degree $2$ of $P_m \otimes_g K_{1,n}$ with color 1, all vertices $(u_i,1)$ where $g(u_i)=1$ with color $3$, and all the remaining vertices with color $2$, see Fig.~\ref{Fig path-star} again. Note that vertices with color $2$ are mutually at distance at least $3$, and vertices colored with color $3$ are pairwise at distance at least $4$. Hence such a coloring is a packing coloring and we can conclude that  $\pch(P_m, K_{1,n}) = 3$. 

We next consider $\pchu(P_m, K_{1,n})$. For this sake, let $g\in  K_{1,n}^{P_m}$ be a function defined by $g(u_i)=1$, $i\in [m]$. Then $P_m \otimes_g K_{1,n}$ is isomorphic to $P_m \odot nK_1$. If $n=3$, then Theorem~\ref{thm:laiche 3K_1} implies that $\pchu(P_m, K_{1,n})\ge 6$ whenever $m\ge 10$. And if $n>3$, then Theorem \ref{thm:laiche pK_1} gives $\pchu(P_m, K_{1,n})\ge 7$. 

It remains to prove that $\pchu(P_m, K_{1,n}) \le 9$. Let $f\in K_{1,n}^{P_m}$ be arbitrary and let $Q$ be a shortest path in $P_m \otimes_f K_{1,n}$ between a an arbitrary, fixed vertex of $u_1K_{1,n}$, and a an arbitrary, fixed vertex of $u_mK_{1,n}$. 
We define a coloring $c$ of $P_m \otimes_f K_{1,n}$ in the following way. First color the vertices of $Q$ with the pattern $$ \begin{array}{l} 3,4,7,5,3,6,4,9,3,5,7,4,3,6,8,5,3,4,7,9,3,5,4,6,3,8,7,4,3,5,6,9,\\ 3,4,7,5,3,6,4,8,3,5,7,4,3,6,9,5,3,4,7,8,3,5,4,6,3,9,7,4,3,5,6,8\,,\end{array}$$ 
which was found and verified by a computer. Then color all leaves with $1$, and finally color all the remaining vertices with $2$. Note that the vertices with color $2$ are centers of those stars $u_iK_{1,n}$ which do not belong to $Q$ and hence they are pairwise at distance at least $3$ apart. We can conclude that $c$ is a packing coloring and since $f$ was arbitrary, we can conclude that $\pchu(P_m, K_{1,n})\le 9$.
\qed

\subsection{Stars by stars}

As the last step, we investigate the (upper)  Sierpi\'{n}ski packing chromatic number of the product of two stars $K_{1,m}$ and $K_{1,n}$. Again, to avoid trivial cases, we consider $m\ge 3$ and $n\ge 3$.

\begin{theorem}
\label{thm:star-by-star}
If $m\ge 3$ and $n\ge 3$, then $\pch(K_{1,m}, K_{1,n}) = 3$ and 
$\min \{m+2, n+2\} \le \pchu(K_{1,m}, K_{1,n}) \le \max \{m+2,n+2\}$. 
\end{theorem}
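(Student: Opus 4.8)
The plan is to introduce coordinates and record the structure of $G_f:=K_{1,m}\otimes_f K_{1,n}$ once, then treat the two assertions separately. Write $K_{1,m}$ with center $u$ and leaves $u_1,\dots,u_m$, and $K_{1,n}$ with center $v$ and leaves $v_1,\dots,v_n$; then $G_f$ is a tree consisting of one \emph{central copy} $uK_{1,n}$ (center $(u,v)$, leaves $(u,v_j)$) and $m$ \emph{leaf copies} $u_iK_{1,n}$, where $u_iK_{1,n}$ is joined to the central copy by the single connecting edge between $(u,f(u_i))$ and $(u_i,f(u))$. Two facts are used throughout: each $K_{1,n}$-copy has diameter $2$, so in any packing coloring a color $\ge 2$ appears at most once per copy; and, with $a:=f(u)$, if $a=v$ every leaf copy is attached through its own center and two leaf-copy centers can be at distance $2$, while if $a\ne v$ every leaf copy is attached through a leaf and distinct leaf-copy centers are at distance $\ge 4$. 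The equality $\pch(K_{1,m},K_{1,n})=3$ is then immediate: every $G_f$ contains $P_4$ (a leaf copy, its connecting edge, and one further edge of the central copy), so $\pch(G_f)\ge 3$; and for $f$ with $f(u)=v_1$, $f(u_i)=v$ for all $i$, the graph $G_f$ joins $(u,v)$ to the leaves $(u,v_j)$ and to pendant paths from $(u,v)$ through $(u_i,v_1)$ to $(u_i,v)$ with pendant leaves on each $(u_i,v)$, and the coloring $(u,v)\mapsto 2$, every leaf-copy center $\mapsto 3$ (pairwise at distance $4$), all other vertices $\mapsto 1$ is a packing $3$-coloring, hence $\pch(G_f)=3$.

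For the lower bound $\pchu(K_{1,m},K_{1,n})\ge\min\{m+2,n+2\}$ I would take the constant $f\equiv v$, so that $G_f$ is the tree in which $(u,v)$ is adjacent to the $n$ pendant leaves $(u,v_j)$ and to the $m$ leaf-copy centers $(u_i,v)$, each carrying $n$ pendant leaves, and then bound $\pch(G_f)$ below by three cases on $c(u,v)$. If $c(u,v)=1$, the $m+n$ neighbors of $(u,v)$ are pairwise within distance $2$ and avoid color $1$, forcing $m+n$ colors $\ge 2$. If $c(u,v)\ge 2$ and some leaf-copy center has color $1$, then its $n$ leaves together with $(u,v)$ are pairwise within distance $2$ and avoid color $1$, forcing $n+1$ colors $\ge 2$ plus color $1$. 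If $c(u,v)\ge 2$ and every leaf-copy center has color $\ge 2$, then the $m$ leaf-copy centers are pairwise at distance $2$ and, with $(u,v)$, force $m+1$ colors $\ge 2$, and either color $1$ is used or the $n$ leaves $(u,v_j)$ force $n$ more colors. In every case $\pch(G_f)\ge\min\{m+2,n+2\}$.

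For the upper bound $\pchu(K_{1,m},K_{1,n})\le\max\{m+2,n+2\}$ I would fix an arbitrary $f$ and split on $a=f(u)$. If $a=v$: no connecting edge joins two leaves, and $(u,v)\mapsto 2$, the leaf-copy centers $\mapsto 3,\dots,m+2$ (pairwise distinct), all remaining vertices (all leaves) $\mapsto 1$ is a packing coloring with $m+2\le\max\{m+2,n+2\}$ colors, since the color-$1$ class is independent. If $a=v_{j^*}$: the leaf-copy centers are pairwise at distance $\ge 4$, so all get color $3$; I would then set $(u,v)\mapsto 2$, color every leaf of every leaf copy (its connecting vertex included) and every non-connecting leaf of the central copy with $1$, and give each connecting leaf of the central copy its own fresh color $\ge 4$ (these are pairwise at distance $2$, and at distance $2$ from a leaf-copy center of color $3$, so their colors are forced distinct and $\ge 4$). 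There are at most $\min\{m,n\}$ such fresh colors, so this uses at most $3+\min\{m,n\}\le\max\{m,n\}+2$ colors, except when $m=n$ and every central leaf is a connecting leaf carrying exactly one leaf copy. In that balanced case I would instead set $(u,v)\mapsto 2$, color the $n$ central leaves with the distinct colors $3,\dots,n+2$, color every connecting leaf and every pendant leaf with $1$, and color every leaf-copy center with $2$ --- each leaf-copy center is at distance $3$ from $(u,v)$ and at distance $2$ from a central leaf of color $\ge 3$, so nothing clashes --- using exactly $n+2=\max\{m+2,n+2\}$ colors.

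The hard part is precisely this balanced case $m=n$ of the upper bound: the otherwise uniform strategy of giving each occupied central leaf a private color exceeds the target by one, and must be replaced by the alternative coloring above, where one additionally checks that placing all leaf-copy centers at color $2$ conflicts neither with $(u,v)$ nor with the recolored central leaves. The rest is the routine but moderately long verification that each color class in each proposed coloring satisfies its packing distance constraint; this always reduces to the two distance facts recorded at the start --- diameter $2$ inside each $K_{1,n}$-copy, and distance $\ge 4$ between distinct leaf copies attached through leaves versus only $\ge 2$ when attached through their centers.
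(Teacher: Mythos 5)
Your proof is correct and follows essentially the same route as the paper's: the same identification of $\pch=3$ via a $P_4$ and an explicit $3$-coloring, the same lower-bound construction (the constant map onto the center of $K_{1,n}$, followed by a three-way case analysis on where color $1$ sits), and the same case split for the upper bound according to whether the center of the base maps to the center or to a leaf of the fiber. The one place you genuinely diverge is the tight case of the upper bound when $f$ sends the base's center to a leaf: the paper colors all leaf-fiber centers with $2$, gives the central fiber's center and connecting leaves private colors, and disposes of the excess color via the claim that surjectivity of $f$ forces $m>n$; you instead put color $2$ on the central copy's center, color $3$ on all leaf-copy centers (using that they are pairwise at distance at least $4$), and isolate the balanced case $m=n$ with all central leaves occupied, for which you supply a separate coloring. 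Your treatment of that tight case is in fact the more careful one --- the paper's assertion that a surjective $f$ forces $m>n$ overlooks the possibility $m=n$ with $f$ a bijection, although its coloring still happens to use only $n+2$ colors there --- so your explicit balanced-case argument closes a small gap rather than introducing one.
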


\proof
Let $V(K_{1,m})=\{u_1,\dots, u_{m+1}\}$ with $u_1$ as the center of $K_{1,m}$, and let $V(K_{1,n})=[n+1]$ with $1$ as the center of $K_{1,n}$. 

Since for any $f\in  K_{1,n}^{K_{1,m}}$, the Sierpi\'nski product $K_{1,m} \otimes_f K_{1,n}$ contains $P_4$ as a subgraph, we have  $\pch(K_{1,m}, K_{1,n})\geq 3$. Consider a constant function  $g\in K_{1,n}^{K_{1,m}}$ with $g(u_i)=n+1$, $i\in [m+1]$. Color the vertex $(u_1,n+1)$ of $K_{1,m} \otimes_g K_{1,n}$ with color $3$, the vertices $(u_i,1)$, $i\in [m+1]$, with color $2$, and all the remaining vertices of with color $1$. Since this coloring is a packing coloring of $K_{1,m} \otimes_g K_{1,n}$, we can conclude that $\pch(K_{1,m}, K_{1,n}) = 3$.

\medskip
Now we focus on $\pchu(K_{1,m}, K_{1,n})$. Consider the Sierpi\'nski product $K_{1,m} \otimes_g K_{1,n}$, where $g(u_i) = 1$, $i\in [m+1]$. This graph consists of a star $K_{1,m+n}$ with the center $(u_1,1)$ and $m$ copies of the star $K_{1,n}$, each of them attached to a private leaf of $K_{1,m+n}$. Now we color vertices of $K_{1,m} \otimes_g K_{1,n}$. If $(u_1,1)$ is colored with color $1$, then all its neighbors must have mutually distinct colors different from $1$, implying that we need at least $m+n+1$ colors. Also, if $(u_i,1)$, $i > 1$, is colored with color $1$, then (again) all its $n+1$ neighbours must have mutually distinct colors different from $1$, implying that we need at least $n+2$ colors. The last case is when all the vertices $(u_i,1)$, $i \in [m+1]$, receive color different from $1$. Then these vertices must receive different colors and hence such a coloring requires at least $m+2$ colors. Thus $\pchu(K_{1,m}, K_{1,n})\ge \min\{m+2,n+2\}$.

Let $f\in  K_{1,n}^{K_{1,m}}$ be arbitrary, $f_i$ denote number of vertices of $K_{1,m+1}$ with function value $i$, $i\in\{1,2,3,\dots, n+1\}$, and let $H_j$ denote the subgraph of $G_f$ induced by $V(u_jK_{1,n})$ ($j\in\{1,2,\dots, m\}$), i.e.,  the fiber $u_iK_{1,n}$. For proving the upper bound we consider the following two possibilities:

\smallskip\noindent
{\bf Case 1}: $f(u_1)\not= 1$. \\
Note that the connecting vertex of each $u_jK_{1,n}$, $j\in \{2,3,\dots, m+1\}$, is of degree $2$. In each $u_jK_{1,n}$, $j>1$, we color the vertex $(u_j,1)$ with color $2$, and all the remaining vertices with color $1$. In $u_1K_{1,n}$, we color all leaves which are not connecting vertices with color $1$, and the remaining vertices with mutually distinct colors $3,4,\dots$ This is a packing coloring which uses most $n+3$ colors. If $f$ is surjective, then clearly $m>n$ and $m+2\ge n+3$. On the other hand, if $f$ is not surjective, e.g.\ $j$ is not in the range of $f$ for some $j \in [n+1]$, then we can color the vertex $(u_1,j)$ with color $1$. In any case, we used at most $n+2$ colors.

\smallskip\noindent
{\bf Case 2}: $f(u_1)= 1$. \\
In this case we infer that the vertices $V(K_{1,m}) \times \{2,\ldots, n+1\}$ form an independent set, hence we can color each of then with color $1$. The remaining $m+1$ vertices can be colored with distinct colors $2,3,\dots, m+1$. Thus we have a packing coloring using $m+2$ colors.
\qed

\section{Recognition of Sierpi\'{n}ski products with acyclic base graphs}
\label{sec:recognition}

In this section, we demonstrate that it can be checked in polynomial time whether a given graph is isomorphic to a Sierpi\'{n}ski product graph whose both base graph and the fiber graph are trees. 

Let $e = uv$ be a pendant edge of a tree $T$, and consider the Sierpi\'{n}ski product $T \otimes_f H$, where $H$ is an arbitrary graph. Then we say that the connecting edge between the fibers $uH$ and $vH$ is a \emph{pendant connecting edge} of $T \otimes_f H$. 

\begin{lemma}
\label{lem:pendant-conn-edge}
Let $T$ be a tree, $H$ a graph, and $e$ an edge of the Sierpi\'{n}ski product $T \otimes_f H$. Then $e$ is a pendant connecting edge if and only if $e$ is a cut edge such that the two components of $(T \otimes_f H)-e$ have respective orders $n(T)$ and $n(T \otimes_f H) - n(T)$.
\end{lemma}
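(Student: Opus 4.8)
The plan is to prove both directions by analyzing the block structure of $T \otimes_f H$. First I would record the basic structural fact underpinning everything: since $T$ is a tree, deleting any edge $gg'$ of $T$ splits $T$ into two subtrees $T_1 \ni g$ and $T_2 \ni g'$, and the corresponding connecting edge of $T \otimes_f H$ is the \emph{only} edge joining $\bigcup_{x \in V(T_1)} xH$ to $\bigcup_{y \in V(T_2)} yH$ (all other connecting edges sit inside one of the two parts, and Type-$1$ edges stay within a single fiber). Hence \emph{every} connecting edge of $T \otimes_f H$ is a cut edge, and the orders of the two components of $(T\otimes_f H)-e$ are $n(H)\cdot n(T_1)$ and $n(H)\cdot n(T_2)$. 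Also note that for an \emph{edge} $e$ inside a fiber $gH$ (a Type-$1$ edge), removing $e$ cannot disconnect $T \otimes_f H$ unless $e$ is itself a cut edge of $H$; even then, each component still contains vertices of every fiber, so neither component has order $n(T)$ (assuming $n(H) \ge 2$; the degenerate case $n(H)=1$ makes $T\otimes_f H \cong T$ and can be dispatched separately, or excluded since then there are no fibers worth speaking of).

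For the forward direction, suppose $e$ is a pendant connecting edge, say $e$ comes from a pendant edge $uv$ of $T$ with $v$ the leaf. Then in the decomposition above, $T_2 = \{v\}$, so one component of $(T\otimes_f H)-e$ is exactly the fiber $vH$, which has $n(H)$ vertices, and the other has $n(T\otimes_f H) - n(H)$ vertices. This is not literally the claimed formula, which involves $n(T)$ rather than $n(H)$ — so I need to reconcile notation. I expect the intended reading is that $H$ here is a tree as well and the statement is being applied in a context where the roles are fixed so that $n(H)$ plays the role written; alternatively the lemma as displayed implicitly takes $n(T \otimes_f H) = n(T)\cdot n(H)$ and the component of order ``$n(T)$'' should be the fiber, of order $n(H)$. \textbf{This notational matching is the main thing to get right}; once the intended statement is pinned down, the forward direction is immediate from the first paragraph.

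For the converse, suppose $e$ is a cut edge whose removal yields components of orders $r$ and $n(T\otimes_f H) - r$, where $r$ is the order of a single fiber (i.e.\ $r = n(H)$ in the reconciled statement). By the observation in the first paragraph, a cut edge of $T\otimes_f H$ must be a connecting edge (a Type-$1$ cut edge gives two components each meeting every fiber, so each has order a multiple of the number of fibers that is strictly between $0$ and the total, never equal to one fiber's worth, using $n(T) \ge 2$). So $e$ is the connecting edge associated to some edge $gg'$ of $T$, and the two components have orders $n(H)\cdot n(T_1)$ and $n(H)\cdot n(T_2)$ with $n(T_1)+n(T_2)=n(T)$. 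The order condition forces $\min\{n(T_1),n(T_2)\}=1$, i.e.\ one of $g,g'$ is a leaf of $T$, which is exactly the statement that $gg'$ is a pendant edge of $T$ and hence $e$ is a pendant connecting edge. The only subtlety is ruling out the Type-$1$ case cleanly and handling the trivial $n(H)=1$ (equivalently, a one-vertex fiber) boundary case, but both are routine. I would also remark that connectivity of $T\otimes_f H$ — needed for ``cut edge'' to make sense and for the component-order bookkeeping — follows from $T$ and $H$ being connected, as established for general Sierpi\'nski products.
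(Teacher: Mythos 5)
Your overall strategy coincides with the paper's: classify $e$ as a Type-$1$ edge or a connecting edge and compare the orders of the components of $(T\otimes_f H)-e$. Your suspicion about the statement is well founded --- the component cut off by a pendant connecting edge is a single fiber, of order $n(H)$, so the lemma should read $n(H)$ in place of $n(T)$; the paper's own proof carries the same slip, and the recognition algorithm that follows indeed tests for a component of order $n_2=n(H)$. With that correction your forward direction and your converse for connecting edges are fine: since $T$ is a tree, the connecting edge arising from $gg'\in E(T)$ is the unique edge between $\bigcup_{x\in V(T_1)}xH$ and $\bigcup_{y\in V(T_2)}yH$, the two components have orders $n(H)\,n(T_1)$ and $n(H)\,n(T_2)$, and a component of order $n(H)$ forces $n(T_i)=1$, i.e.\ $gg'$ pendant.

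However, your disposal of the Type-$1$ case rests on a false claim. If $e=(x,h)(x,h')$ is a Type-$1$ cut edge, it is \emph{not} true that each component of $(T\otimes_f H)-e$ meets every fiber: the fiber $xH$ splits into pieces $A$ and $B$ with $|A|+|B|=n(H)$, and every other fiber attaches, through the tree structure of $T$, entirely to the side of $xH$ containing the relevant connecting vertex; if all connecting vertices of $xH$ lie in $A$, the component containing $B$ is just $B$ itself. (Take $T=H=P_2$: the product is a $P_4$ with a Type-$1$ cut edge one of whose components is a single vertex.) Consequently the ``order a multiple of the number of fibers'' count does not hold either. The correct argument, and the one the paper uses (again modulo the same $n(T)$-versus-$n(H)$ slip), is that the two components have orders $s_1+t_1\,n(H)$ and $s_2+t_2\,n(H)$ with $0<s_1,s_2<n(H)$ and $t_1+t_2=n(T)-1$, so neither order is a multiple of $n(H)$ and in particular neither equals $n(H)$. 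With that one step repaired, your proof is complete and is essentially the paper's.
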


\proof
Let $e = (x,f(x'))(x',f(x))$ be a a pendant connecting edge of $T \otimes_f H$, where $x\in V(T)$ is a vertex with $\deg_T(x) = 1$. Then $(T \otimes_f H)-e$ is the disjoint union of the fiber $xT$ of $T \otimes_f H$ and the Sierpi\'{n}ski product $(T-x) \otimes_{f|T-x} H$. These two components have orders $n(T)$ and $n(T \otimes_f H) - n(T)$, respectively.

Conversely, assume that $e\in T\otimes_f H$ is a cut edge such that the two components of $(T \otimes_f H)-e$ have respective orders $n(T)$ and $n(T \otimes_f H) - n(T)$. 

Suppose first that $e$ is a Type-1 edge, so that $e = (x,h)(x,h')$ for some $x\in V(T)$ and $hh'\in E(H)$. Since $e$ is a cut edge of $T\otimes_f H$, we infer that $e$ is also a cut edge of the fiber $xH$. If the components of $xH-e$ have orders $s_1$ and $s_2$, then $s_1 < n(H)$ and $s_2 < n(H)$. It follows that the components $(T \otimes_f H) - e$ have orders $t_1 n(T) + s_1$ and $t_2 n(T) + s_2$ (for some appropriate $t_1$ and $t_2$), hence then cannot be of orders $n(T)$ and $n(T \otimes_f H) - n(T)$.  

Suppose second that $e$ is a connecting edge which is not a pendant connecting edge. Then each of the two components of $(T \otimes_f H) - e$ has order at least $2n(T)$. 

We conclude that $e$ must be a pendant connecting edge. 
\qed

Let $X$ be an arbitrary connected graph and say that we wish to decide whether $X$ is a non-trivial Sierpi\'{n}ski product graph whose base graph is a tree. First, this is possible only if $n(X) = n_1n_2$ for some $n_1, n_2\ge 2$. We are going to check all such factorizations of $n(X)$ and assume in the rest that $n(X) = n_1n_2$ is fixed. 
Assume now that that $X\cong T \otimes_f H$ for some non-trivial tree $T$ with $n(T) = n_1$, a non-trivial graph $H$ with $n(H) = n_2$, and some $f\in H^T$. For every edge $e$ of $X$ we check whether $e$ is a cut edge such that $X-e$ consists of two components of respective orders $n_2$ and $n_1n_2 - n_2$. If there is no such edge, then by Lemma~\ref{lem:pendant-conn-edge} $X$ is not a Sierpi\'{n}ski product with the base graph being a tree of order $n_1$. Otherwise, let $e$ be an arbitrary edge of $X$ that fulfils the conditions and let $H$ be the component of $X-e$ of order $n_2$. Then, if $X$ is a Sierpi\'{n}ski product graph whose base graph is a tree, then we must have $X\cong T \otimes_f H$. Let now $X'$ be the graph obtained from $X$ by removing the component $H$ and the edge $e$. Then we can proceed as above to detect a possible pendant connecting edge $e'$ of $X'$ such that (at least) one component of $X'-e'$ has order $n_2$, say $H'$. In order that $X\cong T \otimes_f H$, we must check whether $H'\cong H$ holds. If this is not the case, we conclude that $X$ is not a Sierpi\'{n}ski product graph whose base graph is a tree. Otherwise we proceed as above until we either reject $X$ or we find a factorization of $X$ as $T \otimes_f H$. 

Note that in the above arguing, the number of steps and each of the checking can be done in time polynomial with respect to $n(G)$, except the isomorphism testing. As the later is polynomial for trees, we can formulate the following:

\begin{theorem}
\label{thm:factor-tree-by-tree}
If $X$ is a connected graph, then it can be checked in polynomial time whether there exists trees $T_1$ and $T_2$ and $f\in T_2^{T_1}$ such that $X \cong T_1 \otimes_f T_2$. 
\end{theorem}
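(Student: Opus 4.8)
The plan is to turn the algorithmic discussion preceding the statement into a clean argument, whose engine is Lemma~\ref{lem:pendant-conn-edge}: it recasts ``$e$ is a pendant connecting edge'' as the purely combinatorial condition ``$e$ is a cut edge of $X$ whose deletion leaves components of orders $n_2$ and $n(X)-n_2$'', a property testable for every edge in polynomial time by deleting it, running a graph search, and comparing component sizes. First I would set up the outer loop: if $X\cong T_1\otimes_f T_2$ with $n(T_1)=n_1$ and $n(T_2)=n_2$, then $n(X)=n_1n_2$, so it suffices to iterate over the (at most $n(X)$) factorizations $n(X)=n_1n_2$ with $n_1,n_2\ge 2$ and treat each fixed pair $(n_1,n_2)$ separately.

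For a fixed $(n_1,n_2)$ I would run a peeling loop. Since every tree on $n_1\ge 2$ vertices has a leaf, $T_1\otimes_f T_2$ always has a pendant connecting edge; hence one scans the current graph for a cut edge splitting it into components of orders $n_2$ and (current order)$-n_2$. If none exists while the current graph still has more than $n_2$ vertices, the ``only if'' part of Lemma~\ref{lem:pendant-conn-edge} lets us reject this $(n_1,n_2)$. Otherwise one picks such an edge $e$; its order-$n_2$ component $H$ is forced to be a copy of $T_2$, so on the first peeling we store $H$ and the vertex of $H$ incident to $e$, while on every subsequent peeling we test isomorphism of the newly detached order-$n_2$ component with $H$ and reject on failure. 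Deleting $H$ and $e$ leaves a connected graph (the other component of $X-e$) that, if $X$ is genuinely such a product, equals $(T_1-x)\otimes_{f'} T_2$ with $T_1-x$ again a tree, so the loop recurses until a single copy of $H$ remains after $n_1-1$ peelings. Recording, at each step, which fiber the detached fiber was joined to and at which vertices, one assembles a candidate base graph $\widetilde T_1$ (automatically a tree, as every deleted edge was pendant), a candidate $f$, and a partition of $V(X)$ into fibers.

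Correctness then has two directions. The forward one is immediate from Lemma~\ref{lem:pendant-conn-edge}: on the correct pair $(n_1,n_2)$ no step of the loop can reject. For the converse I would append one explicit check: verify that each fiber-induced subgraph of $X$ is isomorphic to $H$ via an isomorphism carrying its recorded connecting vertices to the positions prescribed by $\widetilde T_1$ and $f$, and that the inter-fiber edges of $X$ are exactly the connecting edges dictated by $\widetilde T_1$ and $f$; together these certify $X\cong \widetilde T_1\otimes_f H$. The point is that every isomorphism test used here---the repeated comparisons of detached components with $H$, and the per-fiber checks---is between \emph{trees}, for which isomorphism is polynomial-time decidable; all remaining work is searching, counting, and edge comparison, so the whole procedure is polynomial.

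The step I expect to need the most care is this correctness argument, specifically the claim that the arbitrary choice of pendant connecting edge at each peeling does not affect the outcome---that a tree-by-tree Sierpi\'nski product can be disassembled leaf by leaf in any order and still be correctly recognized---together with the observation that the final consistency test genuinely reduces to tree isomorphism plus a direct edge comparison and never to a general graph isomorphism query. The outer loop, the bridge detection, and the size bookkeeping are all routine.
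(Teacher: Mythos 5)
Your proposal follows the paper's proof almost step for step: the same outer loop over the factorizations $n(X)=n_1n_2$, the same use of Lemma~\ref{lem:pendant-conn-edge} to locate a pendant connecting edge via a bridge-plus-component-size test, the same peel-and-compare loop, and the same appeal to polynomial-time tree isomorphism. The one genuine difference is your final verification step, and you are right to single it out as the delicate point: the paper's argument stops once every peeled component has been checked to be isomorphic to $H$, but that alone does not certify that $X$ is a Sierpi\'nski product. In $T\otimes_f H$ the endpoints, lying in the fibers $g'H$ with $g'$ adjacent to $g$, of the connecting edges toward $gH$ are all copies $(g',f(g))$ of one and the same vertex $f(g)$ of $H$. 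A graph can pass the peeling test while violating this: take three copies of $P_3$ joined along a path of fibers so that the attachment point in the first outer fiber is a leaf of its $P_3$ while the attachment point in the other outer fiber is the centre of its $P_3$; every peel detaches a copy of $P_3$, yet no $f$ realizes the resulting tree as $P_3\otimes_f P_3$. So the consistency check you append is not an optional tidying-up but a needed ingredient.

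That said, the check as you describe it --- per-fiber tree isomorphism carrying the recorded connecting vertices ``to the positions prescribed by $\widetilde T_1$ and $f$'' --- presupposes that $f$ is already in hand, and producing $f$ is exactly where the residual difficulty sits. One must decide whether there exist isomorphisms $\phi_g\colon gH\to H$ and values $f(g)\in V(H)$ such that, for every $g$, the connecting vertices toward $gH$ in all neighbouring fibers are sent to the single vertex $f(g)$ by their respective $\phi$'s, while each $\phi_g$ simultaneously honours the constraints imposed by all neighbours of $g$. This is a constraint-satisfaction problem coupling the fibers along the base tree, not a collection of independent tree-isomorphism queries; it can be resolved in polynomial time by dynamic programming over the base tree (recording, for each fiber and each relevant connecting vertex, the admissible images in $V(H)$, and using tree isomorphism with distinguished vertices at each node), but that argument still has to be written out. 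In short, your route is the paper's route together with a necessary repair, and the repair itself is currently only sketched.
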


\section*{Acknowledgements}

S.K.\ was supported by the Slovenian Research and Innovation Agency (ARIS) under the grants P1-0297, N1-0285, and N1-0355.

\section*{Declaration of interests}
 
The authors declare that they have no conflict of interest. 

\section*{Data availability}
 
Our manuscript has no associated data.



\begin{thebibliography}{99}

\bibitem{balakrishnan-2022}
  K.~Balakrishnan, M.~Changat, A.M.~Hinz, D.S.~Lekha, 
  The median of {S}ierpi\'{n}ski graphs,
  Discrete Appl.\ Math.\ 319 (2022) 159--170. 
  
\bibitem{bresar-2018}
  B.~Bre\v{s}ar, J.~Ferme,
  Packing coloring of Sierpi\'{n}ski-type graphs,
  Aequationes Math.\ 92 (2018) 1091--1118.

\bibitem{bresar-2020}
  B.~Bre\v{s}ar, J.~Ferme, S.~Klav\v zar, D.F.~Rall,
  A survey on packing colorings,
  Discuss.\ Math.\ Graph Theory 40 (2020) 923--970. 

\bibitem{bresar-2007}
  B.~Bre\v sar, S.~Klav\v zar, D.F.~Rall,
  On the packing chromatic number of Cartesian products, hexagonal lattice, and trees,
  Discrete Appl.\ Math.\ 155 (2007) 2303--2311.

\bibitem{bresar-2016}
  B.~Bre\v sar, S.~Klav\v zar, D.F.~Rall,
  Packing chromatic number of base-$3$ Sierpi\'{n}ski graphs,
  Graphs Combin.\ 32 (2016) 1313--1327.

\bibitem{chanda-2025}
  S.~Chanda, R.R.~Iyer, 
  On the {S}ombor index of {S}ierpi\'nski and {M}ycielskian graphs,
  Commun.\ Comb.\ Optim.\ 10 (2025) 20--56.

\bibitem{deng-2021}
  F.~Deng, Z.~Shao and A.~Vesel,
  On the packing coloring of base-$3$ Sierpi\'{n}ski and $H$ graphs,
  Aequationes Math.\ 95 (2021) 329--341.  
  
\bibitem{dliou-2025}
K.~Dliou, 
Independence, matching and packing coloring of the iterated Mycielskian of graphs,
Discrete Appl.\ Math.\ 361 (2025) 22--33.

\bibitem{farrokhi-2021}
M.~Farrokhi Derakhshandeh Ghouchan, E.~Ghorbani, H.R.~Maimani, F.R.~Mahid,
Some algebraic properties of Sierpi\'{n}ski-type graphs,
Ars Math.\ Contemp.\ 20 (2021) 171--186.

\bibitem{ferme-2025}
J.~Ferme, D.~Mesari\v c{} \v Stesl, 
Distance dominator packing coloring of type {II},
Quaest.\ Math.\ 48 (2025) 437--453.

\bibitem{fiala-2010}
J.~Fiala, P.A.~Golovach,
Complexity of the packing coloring problem for trees,
Discrete Appl.\ Math.\ 158 (2010) 771--778.
  
\bibitem{goddard-2008}
W.~Goddard, S.M.~Hedetniemi, S.T.~Hedetniemi, J.M.~Harris, D.F.~Rall,
Broadcast chromatic numbers of graphs,
Ars Combin.\ 86 (2008) 33--49.

\bibitem{gozupek-2025}
D.~G\"oz\"upek, I.~Peterin, 
Grundy packing coloring of graphs,
Discrete Appl.\ Math.\ 371 (2025) 17--30.

\bibitem{gregor-2024}
P.~Gregor, J.~Kranjc, B.~Lu\v zar, K.~\v Storgel, 
Packing coloring of hypercubes with extended {H}amming codes,
Discrete Appl.\ Math.\  359 (2024) 269--277.

\bibitem{grochowski-2025}
H.~Grochowski, K.~Junosza-Szaniawski, 
Partial packing coloring and quasi-packing coloring of the triangular grid,
Discrete Math.\ 348 (2025) Paper 114308.

\bibitem{hik-2011}
  R.~Hammack, W.~Imrich, S.~Klav\v{z}ar,
  Handbook of Product Graphs, Second Edition,
  CRC Press, Boca Raton, FL, 2011.

\bibitem{henning-2023a+}
  M.A.~Henning, S.~Klav\v zar, E.~Kleszcz, M.~Pil\'{s}niak,
  The Sierpi\'{n}ski domination number,
  Ars Math.\ Contemp.\ 24 (2024) \#P3.06. 

\bibitem{henning-2023b+}
  M.A.~Henning, S.~Klav\v zar, I.G.~Yero,
  Resolvability and convexity properties in the Sierpi\'{n}ski product of graphs,
  Mediterr.\ J.\ Math.\ 21(1) (2024) Paper 3.
  
\bibitem{hinz-2017}
  A.M.~Hinz, S.~Klav\v zar, S.~S.~Zemlji\v c,
  A survey and classification of Sierpi\'{n}ski-type graphs,
  Discrete Appl.\ Math.\ 217 (2017) 565--600.

\bibitem{klavzar-1997}
  S.~Klav\v zar, U.~Milutinovi\'c,
  Graphs $S(n,k)$ and a variant of the Tower of Hanoi problem,
  Czechoslovak Math.\ J.\ 47(122) (1997) 95--104.

\bibitem{korze-2019}
  D.~Kor\v ze, A.~Vesel,
  Packing coloring of generalized Sierpi\'nski graphs,
  Discrete Math.\ Theor.\ Comput.\ Sci.\  21  (2019) paper \#7. 

\bibitem{kpzz-2023}
  J.~Kovi\v{c}, T.~Pisanski, S.S.~Zemlji\v{c}, A~\v{Z}itnik,
  The Sierpi\'{n}ski product of graphs,
  Ars Math.\ Contemp.\ 23 (2023) \#P1.01.

\bibitem{laiche-2017}
  D.~La\"{\i}che, I.~Bouchemakh, E.~Sopena, 
  Packing coloring of some undirected and oriented coronae graphs,
  Discuss.\ Math.\ Graph Theory 37 (2017) 665--690. 

\bibitem{liu-2021}
  C.-A.~Liu,
  Roman domination and double Roman domination numbers of Sierpi\'{n}ski graphs $S(K_n,t)$,
  Bull.\ Malays.\ Math.\ Sci.\ Soc.\ 44 (2021) 4043--4058.

\bibitem{mafnucci-2025+}
R.W.~Maffucci,
Regularity and separation for Sierpi\'nski products of graphs,
\url{arXiv:2506.16864} [math.CO]. 

\bibitem{martin-2017}
  B.~Martin, F.~Raimondi, T.~Chen, J.~Martin, 
  The packing chromatic number of the infinite square lattice is between 13 and 15,
  Discrete Appl.\ Math.\ 225 (2017) 136--142.

\bibitem{menon-2023}
  M.K.~Menon, M.R.~Chithra, K.S.~Savitha, 
  Security in {S}ierpi\'{n}ski graphs,
  Discrete Appl.\ Math.\ 328 (2023) 10--15. 

\bibitem{sloper-2004}
  C.~Sloper,
  An eccentric coloring of trees,
  Austral.\ J.\ Combin. 29 (2004) 309--321.

\bibitem{suber-2023}
  B.~Subercaseaux, M.J.H.~Heule, 
  The packing chromatic number of the infinite square grid is $15$,
  Lecture Notes Comp.\ Sci.\ 13993 (2023) 389--406. 

\bibitem{tian-2025}
  J.~Tian, S.~Klav\v zar, 
  General position sets, colinear sets, and Sierpi\'nski product graphs,
  Ann.\ Comb.\ (2024) \url{https://doi.org/10.1007/s00026-024-00732-z}. 

\bibitem{varghese-2021}
  J.~Varghese, S.~Aparna Lakshmanan,
  Italian domination on Mycielskian and Sierpi\'{n}ski graphs,
  Discrete Math.\ Algorithms Appl.\ 13 (2021) 2150037.

\bibitem{zhang-2024}
  X.~Zhang, G.~Yang, C.~He, R.~Klasing, Y.~Mao, 
  The number of spanning trees for {S}ierpi\'nski graphs and data center networks,
  Inform.\ and Comput.\ 300 (2024) Paper 105194.
   
\end{thebibliography}
\end{document}